\DeclareMathAlphabet{\itbf}{OML}{cmm}{b}{it}
\def\EE{\mathbb{E}}
\def\CC{\mathbb{C}}
\def\RR{\mathbb{R}}
\def\eps{\varepsilon}
\def\bP{{\bf P}}
\def\bX{{\itbf X}}
\def\bx{{\itbf x}}
\def\bGamma{\boldsymbol{\Gamma}}
\def\om{{\omega}}
\def\cL{{\mathcal L}}
\def\cA{{\mathcal A}}
\def\cB{{\mathcal B}}
\def\cR{{\mathcal R}}
\def\cM{{\mathcal M}}
\def\cX{{\mathcal X}}
\def\cZ{{\mathcal Z}}
\renewcommand{\hat}{\widehat}
\newtheorem{remark}[theorem]{Remark}
\begin{document}

\title{Wave propagation in waveguides with random
boundaries}

\author{Ricardo Alonso\footnotemark[1],  Liliana Borcea\footnotemark[1] and Josselin
Garnier\footnotemark[2] }

\maketitle

\renewcommand{\thefootnote}{\fnsymbol{footnote}}

\footnotetext[1]{Computational    and   Applied    Mathematics,   Rice
  University,    Houston,   TX    77005.   {\tt    rja2@rice.edu   and
  borcea@rice.edu}} \footnotetext[2]{Laboratoire  de Probabilit\'es et
  Mod\`eles   Al\'eatoires   \&   Laboratoire   Jacques-Louis   Lions,
  Universit{\'e}  Paris VII,  Site Chevaleret,  75205 Paris  Cedex 13,
  France.  {\tt garnier@math.jussieu.fr}}

\renewcommand{\thefootnote}{\arabic{footnote}}

\begin{abstract}
We give a detailed analysis of long range cumulative scattering
effects from rough boundaries in waveguides. We assume small random
fluctuations of the boundaries and obtain a quantitative statistical
description of the wave field. The method of solution is based on
coordinate changes that straighten the boundaries.  The resulting
problem is similar from the mathematical point of view to that of wave
propagation in random waveguides with interior inhomogeneities. We
quantify the net effect of scattering at the random boundaries and
show how it differs from that of scattering by internal
inhomogeneities.
\end{abstract}

\begin{keywords}
Waveguides, random media, asymptotic analysis.
\end{keywords}

\begin{AMS}
76B15, 35Q99, 60F05.
\end{AMS}

\section{Introduction}
\label{sect:intro}
We consider acoustic waves propagating in a waveguide with axis along
the range direction $z$. In general, the waveguide effect may be due
to boundaries or the variation of the wave speed with cross-range, as
described for example in \cite{kohler77,gomez}. We consider here only
the case of waves trapped by boundaries, and take for simplicity the
case of two dimensional waveguides with cross-section ${\mathcal D}$
given by a bounded interval of the cross-range $x$. The results extend
to three dimensional waveguides with bounded, simply connected
cross-section ${\mathcal D} \subset \mathbb{R}^2$.  

The pressure field $p(t,x,z)$ satisfies the wave equation
\begin{equation}
\label{we}
\left[\partial^2_{z} + \partial^2_x
 - \frac{1}{c^2(x)} \partial^2_t \right]p(t,x,z) = F(t,x,z) \, ,
\end{equation}
with wave speed $c(x)$ and source excitation modeled by
$F(t,x,z)$. Since the equation is linear, it suffices to consider a
point-like source located at $(x_0,z=0)$ and emitting a pulse signal
$f(t)$,
\begin{equation}
F(t,x,z) = f(t) \delta(x - x_0 ) \delta(z)  \,.
\label{eq:source}
\end{equation}
Solutions for distributed sources are easily obtained by superposing
the wave fields computed here. 

The boundaries of the waveguide are rough in the sense that they have
small variations around the values $x = 0$ and $x = X$, on a length
scale comparable to the wavelength.  Explicitly, we let
\begin{equation}
B(z) \le x \le T(z) \, , \quad \mbox{where} ~~ |B(z)| \ll X, ~ ~ 
|T(z)-X| \ll X, 
\label{eq:boundaries}
\end{equation}
and take either Dirichlet boundary conditions
\begin{equation}
p(t,x,z) = 0 \,, \quad \mbox{for} ~ x = B(z) ~~\mbox{and} ~~x = T(z),
\label{eq:Dirichlet}
\end{equation}
or mixed, Dirichlet and Neumann conditions
\begin{equation}
p(t,x= B(z),z) = 0\,, \quad \frac{\partial}{\partial n} p(t,x=T(z),z) = 0\,, 
\label{eq:mixed}
\end{equation}
where $n$ is the unit normal to the boundary $x = T(z)$.

The goal of the paper is to quantify the long range effect of
scattering at the rough boundaries.  More explicitly, to characterize
in detail the statistics of the random field $p(t,x,z)$.  This is
useful in sensor array imaging, for designing robust source or target
localization methods, as shown recently in \cite{borcea} in waveguides
with internal inhomogeneities. Examples of other applications are in
long range secure communications and time reversal in shallow water or
in tunnels \cite{garnier_papa,kuperman}.

The paper is organized as follows.  We begin in section
\ref{sect:homog} with the case of ideal waveguides, with straight
boundaries $B(z) = 0$ and $T(z) = X$, where energy propagates via
guided modes that do not interact with each other. Rough, randomly
perturbed boundaries are introduced in section \ref{sect:rand}.  The
wave speed is assumed to be known and dependent only on the
cross-range. Randomly perturbed wave speeds due to internal
inhomogeneities are considered in detail in
\cite{kohler77,kohler_wg77,dozier, garnier_papa,book07}.  Our approach
in section \ref{sect:rand} uses changes of coordinates that straighten
the randomly perturbed boundaries. We carry out the analysis in 
detail for the case of
Dirichlet boundary conditions \eqref{eq:Dirichlet} in sections \ref{sect:rand} and 
\ref{sect:diffusion}, and discuss the 
results in section \ref{sect:comparisson}. The extension to
the mixed boundary conditions \eqref{eq:mixed} is presented in section
\ref{sect:mixed}. We end in section \ref{sect:summary} with a summary.

Our approach based on changes of coordinates that straighten the boundary 
leads to a transformed problem that is similar  from
the mathematical point of view to that in waveguides with interior
inhomogeneities, so we can use the techniques from
\cite{kohler77,kohler_wg77,dozier, garnier_papa,book07} to obtain the
long range statistical characterization of the wave field in section
\ref{sect:diffusion}.  However, the cumulative scattering effects of
rough boundaries are different from those of internal inhomogeneities,
as described in section \ref{sect:comparisson}.  
We quantify these effects by estimating in a high frequency regime three important, mode dependent length scales:  
the scattering mean free path, which is the distance over which the modes lose coherence, the transport mean free path,  which is the 
distance over which the waves forget the initial direction, and the equipartition distance, over which 
the energy is uniformly distributed among the modes, independently of the initial conditions at the 
source.  We show that the random boundaries affect most strongly the high order modes,
which lose coherence rapidly, that is they have a short scattering mean free path.  Furthermore, 
these modes do not exchange efficiently energy with the other modes, so they have a longer 
transport mean free path. 
The  lower order modes can travel much longer distances before they lose their coherence 
and remarkably, their  scattering mean free path 
is similar to the transport mean  free path and to the equipartition distance. 
That is to say, in waveguides with random boundaries, when the waves travel distances that exceed 
the scattering mean free path of the low order modes, not only all the modes are incoherent, but also 
the energy is uniformly distributed among them. At such distances the wave field has lost all   information 
about the cross-range location of the source in the waveguide. 
These results can be contrasted with the situation 
with waveguides with interior random inhomogeneities, in which
the main mechanism for the loss of coherence of the 
 fields is the exchange of energy between neighboring modes \cite{kohler77,kohler_wg77,dozier, garnier_papa,book07}, so the scattering mean free paths 
 and the transport mean free paths are similar for all the modes. The low order 
 modes lose coherence much faster than in waveguides with random boundaries, and the equipartition 
 distance is longer than the scattering mean free path of these modes.  
 
\section{Ideal waveguides}
\label{sect:homog}
Ideal waveguides have straight boundaries $x = 0$ and $x = X$.  Using
separation of variables, we write the wave field as a superposition of
waveguide modes.  A waveguide mode is a monochromatic wave $P(t,x,z) =
\hat{P}(\omega,x,z) e^{- i \omega t}$ with frequency $\omega$, where
$\hat{P}(\omega,x,z)$ satisfies the Helmholtz equation
\begin{equation}
\label{eqe0}
\left[{\partial_z^2} + {\partial_x^2} + {\omega^2}/{c^{2}(x)} \right]
\hat{P}( \omega,x,z) = 0 \, , \quad z \in \mathbb{R}, ~ x \in (0,X),
\end{equation}
and either Dirichlet or mixed, Dirichlet and Neumann homogeneous
boundary conditions. The operator $\partial^2_x + \om^2/ c^{2}(x)$
with either of these conditions is self-adjoint in $L^2(0,X)$, and its
spectrum consists of an infinite number of discrete eigenvalues
$\{\lambda_j(\om)\}_{j \ge 1}$, assumed sorted in descending
order. There is a finite number $N(\omega)$ of positive eigenvalues
and an infinite number of negative eigenvalues.  The eigenfunctions
$\phi_j(\omega,x)$ are real and form an orthonormal set
\begin{equation}
\int_0^X dx \, { \phi_j(\omega,x)} \phi_l(\omega,x) = \delta_{jl}
\, , \quad j,l \ge 1,
\label{eq:orthog}
\end{equation}
where $\delta_{jl}$ is the Kronecker delta symbol.

For example, in homogeneous waveguides with $c(x) = c_o$, and for the
Dirichlet boundary conditions, the eigenfunctions and eigenvalues are
\begin{equation}
\label{eq:1}
\phi_j(x) = \sqrt{\frac{2}{X}} \mbox{sin} \left( \frac{\pi j x}{X}
\right), \qquad \lambda_j(\om) = \left(\frac{\pi}{X}\right)^2 \left[
(kX/\pi)^2 - j^2 \right], \quad \quad j = 1, 2, \ldots
\end{equation}
and the number of propagating modes is $ N(\om) = \left \lfloor
{k X}/{\pi} \right \rfloor$, where $\lfloor y \rfloor$ is the
integer part of $y$ and $k = \om/c_o$ is the homogeneous wavenumber.

To simplify the analysis, we assume that the source emits a pulse
$f(t)$ with Fourier transform
$$ \hat{f}(\omega)= \int_{-\infty}^\infty dt \, e^{i \omega t} f(t)\,, 
$$ supported in a frequency band in which the number of positive
eigenvalues is fixed, so we can set $N(\om) = N$.  We also assume that
there is no zero eigenvalue, and that the eigenvalues are simple.  The
positive eigenvalues define the modal wavenumbers
$\beta_j(\omega)=\sqrt{\lambda_j(\omega)}$ of the forward and backward
propagating modes
\[\hat{P}_j(\omega,x,z) = \phi_j(\omega,x) e^{ \pm i
\beta_j(\omega) z}, \quad j = 1, \ldots, N. \] The infinitely many
remaining modes are evanescent \[\hat{P}_j(\omega,x,z) = \phi_j(\omega
,x) e^{ - \beta_j(\omega) |z|}, \quad j > N\,, \] with wavenumber
$\beta_j (\omega)= \sqrt{-\lambda_j(\omega)}\, $.

The wave field $p(t,x,z)$ due to the source located at $(x_0,0)$ is
given by the superposition of $\hat{P}_j(\omega,x,z)$,
\begin{eqnarray*}
{p}(t,x,z) = \int \frac{d \om}{2 \pi} e^{-i \om t} \left[ \sum_{j=1}^N
\frac{\hat{a}_{j,o} (\omega)}{\sqrt{\beta_j(\omega)}} e^{i\beta_j
(\omega)z} \phi_j(\omega,x) + \sum_{j=N+1}^\infty \frac{\hat{e}_{j,o}
(\omega)}{\sqrt{\beta_j(\omega)}} e^{- \beta_j(\omega) z}
\phi_j(\omega,x) \right] {\bf 1}_{(0,\infty)}(z) + \\ \int \frac{d
\om}{2 \pi} e^{-i \om t} \left[ \sum_{j=1}^N \frac{\hat{a}_{j,o}^{\,
-} (\omega)}{\sqrt{\beta_j(\omega)}} e^{-i\beta_j (\omega) z}
\phi_j(\omega,x) + \sum_{j=N+1}^\infty \frac{\hat{e}_{j,o}^{\, -}
(\omega)}{\sqrt{\beta_j(\omega)}} e^{ \beta_j(\omega) z}
\phi_j(\omega,x) \right] {\bf 1}_{(-\infty,0)}(z) \, .
\end{eqnarray*}
The first term is supported at positive range, and it consists of
forward going modes with amplitudes $\hat a_{j,o}/\sqrt{\beta_j}$ and
evanescent modes with amplitudes $\hat e_{j,o}/\sqrt{\beta_j}$.  The
second term is supported at negative range, and it consists of
backward going and evanescent modes. The modes do not interact with
each other and their amplitudes 
\begin{eqnarray}
\hat{a}_{j,o}(\omega) &=& \hat{a}_{j,o}^{\, -}(\omega) = \frac{\hat
f(\om)}{2i \sqrt{\beta_j(\omega)}} { \phi_j(\omega,x_0 )}\,, \quad j =
1, \ldots, N, \nonumber \\ \hat{e}_{j,o}(\omega) &=&
\hat{e}_{j,o}^{\,-}(\omega) = - \frac{\hat f(\om)}{2
\sqrt{\beta_j(\omega)}} { \phi_j(\omega,x_0 )}\,, \quad j > N,
\label{eq:idealab} 
\end{eqnarray}
are determined by the source
excitation (\ref{eq:source}), which gives the jump conditions at $z=0$,
\begin{eqnarray}
&&\hat{p}(\omega, x,z=0^+)-\hat{p}(\omega, x,z=0^-)= 0 \, , \nonumber\\ &&
{\partial_z \hat{p}} (\omega,x, z=0^+) - {\partial_z \hat{p}}
(\omega,x,z=0^-) = \hat{f}(\omega) \delta( x - x_0 )\, .
\end{eqnarray}

We show next how to use the solution in the ideal waveguides as a
reference for defining the wave field in the case of randomly
perturbed boundaries.

\section{Waveguides with randomly perturbed boundaries}
\label{sect:rand}
We consider a randomly perturbed section of an ideal waveguide, over
the range interval $z\in [0,L/\eps^2]$.  There are no perturbations
for $z < 0$ and $z > L/\eps^2$. The domain of the perturbed section is
denoted by
\begin{equation}
\label{form:waveguide}
\Omega^\eps = \big\{ (x,z) \in \RR^2, ~ B(z) \leq x \leq T(z), ~ 0 < 
z <  L/\eps^2 \big\} \, ,
\end{equation}
where 
\begin{equation}
B(z) = \eps X \mu(z)\,, \quad \quad T(z) =X[1 +\eps \nu(z)]\,, \qquad
\eps \ll 1.
\label{eq:defBT}
\end{equation}
Here $\nu$ and $\mu$ are independent, zero-mean, stationary and ergodic
random processes in $z$, with covariance function
\begin{equation}
\cR_\nu(z) = \EE[ \nu(z+s) \nu(s) ] \quad \mbox{and} ~ ~ 
\cR_\mu(z) = \EE[ \mu(z+s) \mu(s) ].
\end{equation}
We assume that $\nu(z)$ and $\mu(z)$ are bounded, at least twice
differentiable with bounded derivatives, and have enough
decorrelation\footnote{Explicitly, they are $\varphi$-mixing
processes, with $\varphi \in L^{1/2}(\RR^+)$, as stated in
\cite[4.6.2]{kushner}.}. The covariance functions are normalized
so that $\cR_\nu(0)$ and $\cR_\mu(0)$ are of order one, and the magnitude
of the fluctuations is scaled by the small, dimensionless parameter
$\eps$. 

That the random fluctuations are confined to the range interval $z \in
(0,L/\eps^2)$, with $L$ an order one length scale can be motivated as
follows: By the hyperbolicity of the wave equation, we know that if we
observe $p(t,x,z)$ over a finite time window $t \in (0,T^\eps)$, the
wave field is affected only by the medium within a finite range
$L^\eps$ from the source, directly proportional to the observation
time $T^\eps$. We wish to choose $T^\eps$ large enough, in order to
capture the cumulative long range effects of scattering from the
randomly perturbed boundaries. It turns out that these effects become
significant over time scales of order $1/\eps^2$, so we take $L^\eps =
L/\eps^2$. Furthermore, we are interested in the wave field to the
right of the source, at positive range. We will see that the
backscattered field is small and can be neglected when the conditions
of the forward scattering approximation are satisfied (see Subsection \ref{secforward}).
Thus, the medium on the left of the
source has negligible influence on $p(t,x,z)$ for $z > 0$, and we may
suppose that the boundaries are unperturbed at negative range. The
analysis can be carried out when the conditions of the forward scattering
approximation are not satisfied, at considerable complication of the calculations, as
was done in \cite{garnier_solna} for waveguides with internal
inhomogeneities.

We assume here and in sections \ref{sect:diffusion} and
\ref{sect:comparisson} the Dirichlet boundary conditions
\eqref{eq:Dirichlet}. The extensions to the mixed boundary conditions
\eqref{eq:mixed} are presented in section \ref{sect:mixed}. The main
result of this section is a closed system of random differential
equations for the propagating waveguide modes, which describes the
cumulative effect of scattering of the wave field by the random
boundaries. We derive it in the following subsections and we analyze
its solution in the long range limit in section \ref{sect:diffusion}.

\subsection{Change of coordinates} We reformulate the problem in 
the randomly perturbed waveguide region $\Omega^\eps$ by changing
coordinates that straighten the boundaries,
\begin{equation}
\label{eq:6}
x = B(z) + \left[ T(z) - B(z) \right] \frac{\xi}{X}\,, \quad \xi \in
[0,X].
\end{equation}
We take this coordinate change because it is simple, but we show
later, in section \ref{sect:indc}, that the result is
independent of the choice of the change of coordinates.
In the new coordinate system, let
\begin{equation}
u(t,\xi,z) = p\left(t,B(z) + \left[ T(z) - B(z) \right] \frac{\xi}{X},
    z\right)\,, \qquad p(t,x,z) = u\left( t , \frac{(x-B(z))X}{T(z)-B(z)}
    ,z \right)\,.
\label{eq:defu}
\end{equation}
We obtain using the chain rule that the Fourier transform $\hat u(\om,\xi,z)$ 
satisfies the equation 
\begin{eqnarray}
\nonumber \partial_z^2 \hat{u} + \frac{\left[1 + \left[ (X-\xi)B' +
\xi T'\right]^2 \right]}{(T-B)^2} X^2 \partial_\xi^2 \hat{u} -
\frac{2[(X-\xi)B'+ \xi T']}{T-B}X \partial^2_{\xi z} \hat{u} + \\
\nonumber \left\{ \frac{2 B'(T'-B') }{(T-B)^2} -\frac{B''}{T-B} + \frac{\xi}{X}
\left[ 2 \left(\frac{T'-B'}{T-B}\right)^2 - \frac{T''-B''}{T-B} \right]
\right\} X \partial_\xi \hat{u} + \\ + \omega^2/
c^{2}\big(B(z)+(T(z)-B(z))\xi/X\big) \hat{u} = 0\, , 
\label{eq:pertw1}
\end{eqnarray}
for $ z \in (0,L/\eps^2)$ and $\xi \in (0,X)$. Here the prime stands
for the $z$-derivative, and the boundary conditions at $\xi = 0$ and
$X$ are
\begin{equation}
\hat{u}(\omega,0,z) =\hat{u}(\omega,X,z)=0\, .
\end{equation}
Substituting definition \eqref{eq:defBT} of $B(z)$ and $T(z)$, and
expanding the coefficients in \eqref{eq:pertw1} in series of $\eps$,
we obtain that 
\begin{equation}
\label{eq:7}
\left( \cL_0 + \eps \cL_1 + \eps^2 \cL_2 + \ldots \right) \hat
u(\om,\xi,z) = 0\,, 
\end{equation}
where 
\begin{equation}
\label{eq:8}
\cL_0 = \partial_z^2 + \partial^2_\xi + {\om^2}/{c^2(\xi)}
\end{equation}
is the unperturbed Helmholtz operator. The first and 
second order perturbation operators are given by 
\begin{equation}
\label{eq:9}
\cL_1 + \eps \cL_2 = q^\eps(\xi,z) \partial^2_{\xi z} +
\cM^\eps(\om,\xi,z)\,,
\end{equation}
with coefficient
\begin{equation}
\label{eq:10}
  q^\eps(\xi,z) = -2 \left[ (X-\xi) \mu'(z) + \xi \nu'(z) \right]
\left[1 - \eps
    \left(\nu(z)-\mu(z)\right) \right],
\end{equation}
and differential operator
\begin{eqnarray}
  \cM^\eps(\om,\xi,z) &=& - \left\{ 2 \left(\nu-\mu\right) - 3 \eps
    \left( \nu-\mu\right)^2 - \eps \left[ (X-\xi) \mu' + \xi \nu'
    \right]^2 \right\} \partial^2_\xi - \nonumber \\ && \left\{ \left[
    (X-\xi) \mu'' + \xi \nu'' \right] \left[1 -
    \eps\left(\nu-\mu\right)\right] - 2 \eps \left(\nu' -\mu'\right)
    \left[ (X-\xi) \mu' + \xi \nu' \right] \right\} \partial_\xi +
    \nonumber \\ && \om^2 \left[ (X-\xi) \mu + \xi \nu \right]
    \partial_\xi c^{-2}(\xi) + \frac{\eps \om^2}{2} \left[ (X-\xi) \mu
    + \xi \nu \right]^2 \partial^2_\xi c^{-2}(\xi)\,.
\label{eq:11}
\end{eqnarray}
The higher order terms are denoted by the dots in \eqref{eq:7}, and
are negligible as $\eps \to 0$, over the long range scale $L/\eps^2$
considered here.

\subsection{Wave decomposition and mode coupling}
\label{sect:wavedec}
Equation \eqref{eq:7} is not separable, and its solution is not a
superposition of independent waveguide modes, as was the case in ideal
waveguides. However, we have a perturbation problem, and we can use
the completeness of the set of eigenfunctions $\{ \phi_j(\om,\xi)\}_{j
\ge 1}$ in the ideal waveguide to decompose $\hat u$ in its
propagating and evanescent components,
\begin{equation}
\label{eq:WD1}
\hat u(\om,\xi,z) = \sum_{j=1}^N \phi_j(\om,\xi) \hat u_j(\om,z) + 
 \sum_{j=N+1}^\infty \phi_j(\om,\xi) \hat v_j(\om,z).
\end{equation}
The propagating components $\hat u_j$ are decomposed further in the
forward and backward going parts, with amplitudes $\hat
a_j(\om,z)$ and $\hat b_j(\om,z)$,
\begin{eqnarray}
\hat u_j = \frac{1}{\sqrt{\beta_j}} \left( \hat a_j e^{i \beta_j z} +
\hat b_j e^{-i \beta_j z} \right), \quad j = 1, \ldots, N.
\end{eqnarray}
This does not define uniquely the complex valued $\hat a_j$ and $\hat
b_j$, so we ask that they also satisfy
\begin{eqnarray}
\partial_z\hat u_j = i \sqrt{\beta_j} \left( \hat a_j e^{i \beta_j z}
- \hat b_j e^{-i \beta_j z} \right), \quad j = 1, \ldots, N.
\label{eq:WD2}
\end{eqnarray}
 This choice is motivated by the behavior of the solution in ideal
waveguides, where the amplitudes are independent of range and
completely determined by the source excitation. The expression
\eqref{eq:WD1} of the wave field is similar to that in ideal
waveguides, except that we have both forward and backward going modes,
in addition to the evanescent modes, and the amplitudes of the modes
are random functions of $z$.

The modes are coupled due to scattering at the random boundaries, as
described by the following system of random differential equations
obtained by substituting \eqref{eq:WD1} in \eqref{eq:7}, and using the
orthogonality relation \eqref{eq:orthog} of the eigenfunctions,
\begin{eqnarray}
  \partial_z \hat a_j &=& i \eps \sum_{l = 1}^N \left[ C_{jl}^\eps \,
\hat a_l e^{i (\beta_l-\beta_j)z} + \overline{C_{jl}^\eps} \, \hat b_l
e^{-i (\beta_l+\beta_j)z}\right] + \frac{i \eps}{2 \sqrt{\beta_j}}
\sum_{l = N+1}^\infty \hspace{-0.1in} e^{-i \beta_j z} \left(
Q_{jl}^\eps \, \partial_z \hat v_l + M_{jl}^\eps \, \hat v_l \right) +
O(\eps^3)\,,\quad \label{eq:WD3} \\ \partial_z \hat b_j &=& -i \eps
\sum_{l = 1}^N \left[C_{jl}^\eps \, \hat a_l e^{i (\beta_l+\beta_j)z}
+ \overline{C_{jl}^\eps} \, \hat b_l e^{-i (\beta_l-\beta_j)z}\right]-
\frac{i \eps}{2 \sqrt{\beta_j}} \sum_{l = N+1}^\infty \hspace{-0.1in}
e^{-i \beta_j z} \left( Q_{jl}^\eps \, \partial_z \hat v_l +
M_{jl}^\eps \, \hat v_l \right) + O(\eps^3)\,. \qquad \label{eq:WD4}
\end{eqnarray}
The bar denotes complex conjugation, and the coefficients are defined
below. The forward going amplitudes are determined at $z = 0$ by the
source excitation (recall \eqref{eq:idealab})
\begin{equation}
  \hat a_j(\om,0) = \hat a_{j,o}(\om)\,,\label{eq:WD5a} \quad j = 1,
  \ldots, N,
\end{equation}
and we set 
\begin{equation}
 \hat b_j \left( \om,\frac{L}{\eps^2}\right) = 0\,,\quad j = 1,
  \ldots, N,
\label{eq:WD5}
\end{equation}
because there is no incoming wave at the end of the domain.  The
equations for the amplitudes of the evanescent modes indexed by $j >
N$ are
\begin{eqnarray}
  \left( \partial^2_z - \beta_j^2\right) \hat v_j &=& - \eps \sum_{l =
    1}^N 2 \sqrt{\beta_j}\left[ C_{jl}^\eps \, \hat a_l e^{ i \beta_l
      z} + \overline{C_{jl}^\eps} \, \hat b_l e^{-i \beta_l z}
    \right]- \eps \hspace{-0.05in}\sum_{l = N+1}^\infty \left(
  Q_{jl}^\eps \, \partial_z \hat v_l + M_{jl}^\eps \, \hat v_l \right)
  + O(\eps^3)\,,
\label{eq:WD6}
\end{eqnarray}
and we complement them with the decay condition at infinity
\begin{equation}
\lim_{z \to \pm \infty} \hat v_j (\om,z) = 0 \, , \quad j >  N.
\label{eq:WD7}
\end{equation}

The coefficients
\begin{equation}
C_{jl}^\eps(\om,z) = C_{jl}^{(1)}(\om,z) + \eps C_{jl}^{(2)}(\om,z)\,,
\quad \mbox{for} ~ j \ge 1 ~ ~ \mbox{and}~ l = 1, \ldots, N,
\label{eq:WD4.q1}
\end{equation}
are defined by
\begin{eqnarray}
C_{jl}^{(1)}(\om,z) &=& \frac{1}{2 \sqrt{\beta_j(\om) \beta_l(\om)}}
\int_0^X d \xi \phi_j(\om,\xi) \cA_l(\om,\xi,z) \phi_l(\om,\xi)\,,
\label{eq:WD4C1} \\
C_{jl}^{(2)}(\om,z) &=& \frac{1}{2 \sqrt{\beta_j(\om) \beta_l(\om)}}
\int_0^X d \xi \phi_j(\om,\xi) \cB_l(\om,\xi,z) \phi_l(\om,\xi)\, , 
\label{eq:WD4C2} 
\end{eqnarray}
in terms of the linear differential operators 
\begin{eqnarray}
  \cA_l = -2 ( \nu-\mu) \partial^2_\xi - 2 i \beta_l \left[ (X-\xi)
  \mu'+\xi \nu' \right] \partial_\xi- \left[ (X-\xi) \mu''+\xi \nu''
  \right] \partial_\xi + \nonumber \\
  \om^2 \left[ (X-\xi) \mu + \xi \nu \right]
  \partial_\xi c^{-2}(\xi)\,, \qquad 
\label{eq:WD4A} 
\end{eqnarray}
and 
\begin{eqnarray}
  \cB_l = \left\{3(\nu-\mu)^2 + \left[(X-\xi) \mu' + \xi \nu'\right]^2
  \right\}\partial_\xi^2 + 2 i \beta_l (\nu-\mu)\left[(X-\xi) \mu' +
  \xi \nu'\right] \partial_\xi + \nonumber \\
  \left\{(\nu-\mu)\left[(X-\xi) \mu'' + \xi \nu''\right] + 2
  (\nu'-\mu') \left[ (X-\xi)\mu'+ \xi \nu'\right] \right\}
  \partial_\xi + \nonumber \\ \frac{\om^2}{2} \left[ (X-\xi) \mu + \xi
  \nu \right]^2 \partial^2_\xi c^{-2}(\xi)\, .
\label{eq:WD4B}
\end{eqnarray}
We also let for $j \ge 1$ and $l>N$ 
\begin{eqnarray}
Q_{jl}^\eps(\om,z) &=& \int_0^X d \xi q^\eps(\xi,z) \phi_j(\om,\xi)
\partial_\xi \phi_l(\om,\xi) = Q_{jl}^{(1)}(\om,z) + \eps
Q_{jl}^{(2)}(\om,z)\, , \nonumber \\ M_{jl}^\eps(\om,z) &=& \int_0^X d \xi
\phi_j(\om,\xi) \cM^\eps(\om,\xi,z) \phi_l(\om,\xi) =
M_{jl}^{(1)}(\om,z) + \eps M_{jl}^{(2)}(\om,z)\,.
\label{eq:QM}
\end{eqnarray}

\subsection{Analysis of the evanescent modes}
\label{sect:elim_evanesc}
We solve equations (\ref{eq:WD6}) with radiation conditions
(\ref{eq:WD7}) in order to express the amplitude of the evanescent
modes in terms of the amplitudes of the propagating modes. The
substitution of this expression in \eqref{eq:WD3}-\eqref{eq:WD4} gives
a closed system of equations for the amplitudes of the propagating
modes, as obtained in the next section.

We begin by rewriting (\ref{eq:WD6}) in
short as
\begin{equation}
  \left( \partial^2_z - \beta_j^2\right) \hat v_j +
  \eps \hspace{-0.05in} \sum_{l = N+1}^\infty \left( Q_{jl}^\eps \,
  \partial_z \hat v_l + M_{jl}^\eps \, \hat v_l \right) = -\eps
  g_j^\eps\,, \quad  \quad j >N ,
\label{eq:E3}
\end{equation}
where 
\begin{equation}
  g_j^\eps(\om,z) = g_j^{(1)}(\om,z) + \eps 
  g_j^{(2)}(\om,z) + O(\eps^3)\,, \quad  \quad j >N ,
\label{eq:E1}
\end{equation}
and
\begin{equation}
g_j^{(r)} = 2 \sqrt{\beta_j} \sum_{l = 1}^N \left[ C_{jl}^{(r)}
\, \hat a_l(\om,z) e^{ i \beta_l z} + \overline{C_{jl}^{(r)}} \, \hat
b_l e^{-i \beta_l z} \right], \quad r = 1,2 ~ ~ \mbox{and} ~  j > N.
\label{eq:E2}
\end{equation}
Using the Green's function $G_j = e^{-\beta_j |z|}/(2 \beta_j)$,
satisfying
\begin{equation}
\partial_z^2 G_j - \beta_j^2 G_j = - \delta(z)\,, \quad \lim_{|z| \to
\infty} G_j = 0\,, \quad  \quad j >N ,
\label{eq:E4}
\end{equation}
and integrating by parts, we get
\begin{equation}
\left[( {\bf I} - \eps \Psi) \hat {\itbf v}\right]_j(\om,z) = \frac{\eps}{2
\beta_j(\om)} \int_{-\infty}^\infty ds \, e^{-\beta_j(\om) |s|} 
g_j^\eps(\om,z+s)\,  , \quad  \quad j >N . \label{eq:E5}
\end{equation}
Here ${\bf I}$ is the identity and $\Psi$ is the linear integral operator
\begin{eqnarray}
[\Psi \hat {\itbf v}]_j(\om,z) &=& \frac{1}{2
\beta_j(\om)} \sum_{l= N+1}^\infty \int_{-\infty}^\infty ds \,
e^{-\beta_j(\om)|s|} \left( M_{jl}^\eps-\partial_z
Q_{jl}^\eps\right)(\om,z+s) \hat v_l(\om,z+s) + \nonumber \\ &&
\frac{1}{2} \sum_{l= N+1}^\infty \int_{-\infty}^\infty ds \,
e^{-\beta_j(\om)|s|} \mbox{sgn}(s) Q_{jl}^\eps(\om,z+s) \hat
v_l(\om,z+s)\,,
\label{eq:E6}
\end{eqnarray}
acting on the infinite vector $ \hat {\itbf v} = \left( \hat v_{N+1},
\hat v_{N+2}, \ldots \right)$ and returning an infinite vector with
entries indexed by $j$, for $j > N.$ The solvability of equation
\eqref{eq:E5} follows from the following lemma proved in appendix
\ref{sect:Proof}.
\begin{lemma}
\label{lem.1}
Let ${\mathcal L}_N$ be the space of square summable sequences of
$L^2(\mathbb{R})$ functions with linear weights, equipped with the norm 
\[
\|\hat {\itbf v} \|_{{\mathcal L}_N} = \sqrt{ \sum_{j=N+1}^\infty \left( j \|
\hat v_j\|_{L^2(\mathbb{R})}\right)^2} \, .
\]
The linear operator $\Psi: {\mathcal L}_N \to {\mathcal L}_N$ defined
component wise by \eqref{eq:E6} is bounded.
\end{lemma}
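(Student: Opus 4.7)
The operator $\Psi$ splits naturally into two pieces: a convolution in $z$ against the exponential kernel $e^{-\beta_j|s|}$ (of $L^1_s$-norm $2/\beta_j$), and a matrix action in the mode index $l$ through the coefficients $M_{jl}^\eps - \partial_z Q_{jl}^\eps$ and $Q_{jl}^\eps$. My strategy is to handle the $z$-variable by Young's convolution inequality, and to handle the infinite $l$-sum by reinterpreting it as a differential operator acting on the auxiliary function $V(\om,\xi,z) := \sum_{l>N}\phi_l(\om,\xi)\hat v_l(\om,z)$ and then using integration by parts in $\xi$.

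\textbf{Step 1 (Young in $z$).} Since $\|e^{-\beta_j|\cdot|}\|_{L^1} = 2/\beta_j$ and $\|\mathrm{sgn}(\cdot)e^{-\beta_j|\cdot|}\|_{L^1} = 2/\beta_j$, Young's inequality $\|f*g\|_{L^2}\le \|f\|_{L^1}\|g\|_{L^2}$ gives, for each $j>N$,
\[
\|[\Psi\hat{\itbf v}]_j\|_{L^2_z}
\;\le\; \frac{1}{\beta_j^2}\Bigl\|\sum_{l>N}\bigl(M_{jl}^\eps - \partial_z Q_{jl}^\eps\bigr)\hat v_l\Bigr\|_{L^2_z}
+ \frac{1}{\beta_j}\Bigl\|\sum_{l>N} Q_{jl}^\eps\,\hat v_l\Bigr\|_{L^2_z}.
\]
All subsequent work is devoted to bounding the two sums pointwise in $z$.

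\textbf{Step 2 (reduction to a differential operator in $\xi$).} By the definitions \eqref{eq:QM} and linearity,
\[
\sum_{l>N} Q_{jl}^\eps\,\hat v_l = \int_0^X q^\eps(\xi,z)\,\phi_j(\om,\xi)\,\partial_\xi V(\om,\xi,z)\,d\xi,
\qquad
\sum_{l>N} M_{jl}^\eps\,\hat v_l = \int_0^X \phi_j(\om,\xi)\,\bigl(\cM^\eps V\bigr)(\om,\xi,z)\,d\xi,
\]
with $V$ defined above. Parseval's identity together with the eigenvalue relation for $\phi_l$ gives the Sobolev-type equivalences $\|V(\cdot,z)\|_{L^2_\xi}^2 = \sum_{l>N}|\hat v_l(z)|^2$ and $\|\partial_\xi V(\cdot,z)\|_{L^2_\xi}^2 \le C\sum_{l>N}\beta_l^2|\hat v_l(z)|^2$, which is exactly the content of the norm on $\mathcal{L}_N$.

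\textbf{Step 3 (integration by parts and $L^2_\xi$ bounds).} The operator $\cM^\eps$ in \eqref{eq:11} is second order in $\xi$. Naively estimating $\|\cM^\eps V\|_{L^2_\xi}$ would require $V\in H^2_\xi$, which is stronger than the norm of $\mathcal{L}_N$. The fix is to integrate by parts in $\xi$ once against $\phi_j$; since $\phi_j(0)=\phi_j(X)=0$ for Dirichlet conditions, all boundary terms vanish, and every $\partial_\xi^2 V$ is turned into $\partial_\xi V$ at the cost of a derivative $\partial_\xi\phi_j$ of size $O(\beta_j)$. Similarly $\partial_\xi V$ may be handled. Using that $\mu,\nu,\mu',\nu',\mu'',\nu''$ and $c, c^{-1}, \partial_\xi c^{-2}$ are uniformly bounded, Cauchy--Schwarz in $\xi$ and $\|\phi_j\|_{L^\infty}\le C$ yield, uniformly in $z$,
\[
\Bigl|\sum_{l>N} Q_{jl}^\eps\,\hat v_l\Bigr| \le C\,\|\partial_\xi V\|_{L^2_\xi},\qquad
\Bigl|\sum_{l>N} \bigl(M_{jl}^\eps-\partial_z Q_{jl}^\eps\bigr)\hat v_l\Bigr| \le C\,\beta_j\bigl(\|V\|_{L^2_\xi}+\|\partial_\xi V\|_{L^2_\xi}\bigr).
\]
Combining with Step~1 gives $\|[\Psi\hat{\itbf v}]_j\|_{L^2_z}\le (C/\beta_j)\,\|\hat{\itbf v}\|_{\mathcal{L}_N}$, and since $\beta_j\sim j$ this leaves a harmless factor $1/j$. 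Multiplying by $j$, squaring, and summing over $j>N$ must then be done carefully --- the naive sum $\sum_j 1$ diverges, so one cannot simply pull $\|\hat{\itbf v}\|_{\mathcal{L}_N}$ out of the $j$-sum. Instead, one uses orthogonality: the pointwise-in-$z$ estimates above are inner products $\langle \phi_j,\,\mathcal F(\cdot,z)\rangle$ of a $z$-dependent function $\mathcal F$, so Bessel's inequality turns $\sum_j j^2|\langle\phi_j,\mathcal F\rangle|^2$ into $\|\mathcal F\|_{H^1_\xi}^2$, which is controlled by $\|\hat{\itbf v}\|_{\mathcal{L}_N}^2$ pointwise in $z$; integration in $z$ finishes the proof.

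\textbf{Main obstacle.} The delicate point is exactly the last step: the second-order character of $\cM^\eps$ in $\xi$ means that every derivative has to be accounted for, and the Bessel/Parseval argument in $\xi$ (rather than a Schur-type estimate on the raw matrix $(M_{jl}^\eps)_{jl}$) is what closes the bookkeeping. Any cruder approach based on separate bounds $|M_{jl}^\eps|\lesssim\beta_l^2$ and summation in $l$ would fail because of the mismatch between the Sobolev weight $j^2$ in the $\mathcal{L}_N$-norm and the $l^2$-weight needed to convert the $H^1$-estimate on $V$ into a sum over $l$.
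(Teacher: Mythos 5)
Your strategy is sound and genuinely different from the paper's. The paper specializes to constant wave speed, writes out $M_{jl}^\eps$ and $Q_{jl}^\eps$ explicitly for the sine eigenfunctions, recognizes the off-diagonal kernel $\frac{jl}{j^2-l^2}$ as a sum of discrete Hilbert transforms, and invokes the sharp $\ell^2$ bound of the discrete Hilbert transform (Grafakos) together with Young's inequality in $z$; the linear weight $j$ in the $\mathcal{L}_N$-norm is exactly what makes the Hilbert-transform structure appear. You instead resum the $l$-series into $V(\om,\xi,z)=\sum_{l>N}\phi_l\hat v_l$, integrate by parts in $\xi$, and close with Bessel in the cross-range variable. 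What your route buys is independence from the explicit form of the eigenfunctions, so it would extend to variable $c(\xi)$, where the paper's proof as written does not apply; what the paper's route buys is explicit constants and no need to discuss frame properties of auxiliary systems. Both proofs use Young's inequality in $z$ identically.

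One step of your writeup is stated incorrectly and should be repaired, although the repair is within reach of your own argument. The inequality $\sum_j j^2|\langle\phi_j,\mathcal F\rangle|^2\le C\|\mathcal F\|_{H^1_\xi}^2$ is \emph{false} unless $\mathcal F$ vanishes at $\xi=0,X$: integration by parts shows $\langle\phi_j,\mathcal F\rangle$ decays only like $1/j$ when $\mathcal F$ has nonzero boundary values, and here $\mathcal F=q^\eps\partial_\xi V$ does not vanish at the boundary (e.g.\ $q^\eps(0,z)=-2X\mu'(z)[1-\eps(\nu-\mu)]\neq 0$). Fortunately you never actually need the $j^2$-weighted Bessel inequality: for the $Q^\eps$-part the prefactor from Young is $1/\beta_j\sim 1/j$, so $j^2\|[\Psi\hat{\itbf v}]_j\|^2_{L^2_z}\lesssim\|\langle\phi_j,q^\eps\partial_\xi V\rangle\|^2_{L^2_z}$ and \emph{unweighted} Bessel for $\{\phi_j\}$ suffices; for the $\cM^\eps$-part the prefactor is $1/\beta_j^2$, and after your integration by parts the dangerous term is $\langle\partial_\xi\phi_j,a\,\partial_\xi V\rangle$, for which $\frac{j^2}{\beta_j^4}|\langle\partial_\xi\phi_j,a\partial_\xi V\rangle|^2\lesssim|\langle\beta_j^{-1}\partial_\xi\phi_j,a\partial_\xi V\rangle|^2$, and you need the Bessel property of the normalized derivative system $\{\beta_j^{-1}\partial_\xi\phi_j\}_{j>N}$ (immediate for constant $c$, where these are cosines; for variable $c$ it follows from the Gram matrix being identity plus a Hilbert--Schmidt perturbation, which deserves a sentence). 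With that correction the proof closes.
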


\vspace{0.1in} \noindent 
Thus, the inverse operator is 
\[
(I-\eps \Psi)^{-1} = I + \eps \Psi + \ldots,
\]
and the solution of (\ref{eq:E5}) is given by
\begin{equation}
\hat v_j(\om,z) = \frac{\eps}{2 \beta_j(\om)} \int_{-\infty}^\infty ds
\, e^{-\beta_j(\om)|s|} g_j^{(1)}(\om,z+s) + O(\eps^2)\, .
\label{eq:E7}
\end{equation}
Using definition (\ref{eq:E2}) and the fact that the $z$ derivatives
of $\hat a_l$ and $\hat b_l$ are of order $\eps$, we get
\begin{eqnarray}
\hat v_j(\om,z) &=& \frac{\eps}{\sqrt{\beta_j(\om)}} \sum_{l=1}^N \hat
a_l(\om,z) e^{i \beta_l z} \int_{-\infty}^\infty ds \,
e^{-\beta_j(\om)|s|+ i \beta_l(\om) s} C_{jl}^{(1)}(\om,z+s) +
\nonumber \\ && \frac{\eps}{\sqrt{\beta_j(\om)}} \sum_{l=1}^N \hat
b_l(\om,z) e^{-i \beta_l z}\int_{-\infty}^\infty ds \,
e^{-\beta_j(\om)|s|- i \beta_l(\om) s}
\overline{C_{jl}^{(1)}(\om,z+s)} + O(\eps^2)\,.
\label{eq:E8}
\end{eqnarray}

We also need
\begin{equation}
\hat w_j(\om,z) = \partial_z \hat v_j(\om,z)\,,
\label{eq:E9}
\end{equation}
which we compute by taking a $z$ derivative in (\ref{eq:E3}) and using
the radiation condition $\hat w_j(\om, z) \to 0$ as $|z| \to
\infty$. The resulting equation is similar to (\ref{eq:E5})
\begin{eqnarray} 
\left[ ({\bf I} - \eps \tilde \Psi) {\itbf w}
\right]_j\hspace{-0.05in}(\om,z) &=& \frac{\eps}{2}
\int_{-\infty}^\infty \hspace{-0.1in} ds \, e^{-\beta_j(\om) |s|}
 \left[ \mbox{sgn}(s) g_j^\eps(\om,z+s) +
 \hspace{-0.05in}
 \sum_{l=N+1}^\infty
  \hspace{-0.05in}
  M_{jl}^\eps(\om,z+s)
 \hat v_l(\om,z+s)\right], \qquad
\label{eq:E10}
\end{eqnarray}
where we integrated by parts and introduced the linear integral
operator
\begin{eqnarray}
[\tilde \Psi \hat {\itbf w}]_j(\om,z) &=& \frac{1}{2}
\sum_{l= N+1}^\infty \int_{-\infty}^\infty ds \, e^{-\beta_j(\om)|s|}
\mbox{sgn}(s) Q_{jl}^\eps(\om,z+s) \hat w_l(\om,z+s)\,.
\label{eq:E12}
\end{eqnarray}
This operator is very similar to $\Psi$ and it is bounded, as follows
from the proof in appendix \ref{sect:Proof}. Moreover, substituting
expression (\ref{eq:E8}) of $\hat v_l$ in (\ref{eq:E10}) we obtain
after a calculation that is similar to that in appendix
\ref{sect:Proof} that the series in the index $l$ is
convergent. Therefore, the solution of (\ref{eq:E10}) is
\begin{equation} 
\hat w_j(\om,z) = \frac{\eps}{2} \int_{-\infty}^\infty ds \,
e^{-\beta_j(\om) |s|} \mbox{sgn}(s) g_j^\eps(\om,z+s) + O(\eps^2)
\label{eq:E11}
\end{equation}
and more explicitly, 
\begin{eqnarray}
\partial_z \hat v_j(\om,z) &=& \eps \sqrt{\beta_j(\om)} \sum_{l=1}^N
\hat a_l(\om,z) e^{i \beta_l z} \int_{-\infty}^\infty ds \,
e^{-\beta_j(\om)|s|+ i \beta_l(\om) s} \mbox{sgn}(s)
C_{jl}^{(1)}(\om,z+s) + \nonumber \\ && {\eps}{\sqrt{\beta_j(\om)}}
\sum_{l=1}^N \hat b_l(\om,z) e^{-i \beta_l z}\int_{-\infty}^\infty ds
\, e^{-\beta_j(\om)|s|- i \beta_l(\om) s} \mbox{sgn}(s)
\overline{C_{jl}^{(1)}(\om,z+s)} + O(\eps^2)\,.
\label{eq:E14}
\end{eqnarray}

\subsection{The closed system of equations for the propagating modes}
\label{sect:Closed}
The substitution of equations (\ref{eq:E8}) and (\ref{eq:E14}) in
(\ref{eq:WD3}) and (\ref{eq:WD4}) gives the main result of this
section: a closed system of differential equations for the propagating
mode amplitudes. We write it in compact form using the $2N$ vector
\begin{equation} 
{\bX}_\om(z) = \left[ \begin{array}{c} \hat {\itbf a}(\om,z) \\ \hat
    {\itbf b}(\om,z)
\end{array} \right]\, , 
\label{eq:C1}
\end{equation}
obtained by concatenating vectors $\hat {\itbf a}(\om,z)$ and 
$\hat {\itbf b}(\om,z)$ with components $\hat a_j(\om,z)$ and $\hat b_j(\om,z)$,
for $j = 1, \ldots, N$. We have
\begin{equation}
\partial_z {\bX}_\om(z) = \eps {\bf H}_\om(z){\bX}_\om(z) +  \eps^2 
{\bf G}_\om(z){\bX}_\om(z) + O(\eps^3)\,,
\label{eq:C2}
\end{equation}
with $2N \times 2 N$ complex matrices given in block form by 
\begin{equation}
\label{eq:defH}
{\bf H}_\omega(z) =
\left[ \begin{array}{cc}
{\bf H}^{(a)}_\omega(z) & {\bf H}^{(b)}_\omega(z) \\
\overline{{\bf H}^{(b)}_\omega}(z) & \overline{{\bf H}^{(a)}_\omega}(z)\\
\end{array} \right]  \, , \ \ \ \ \
{\bf G}_\omega(z) =
\left[ \begin{array}{cc}
{\bf G}^{(a)}_\omega(z) & {\bf G}^{(b)}_\omega(z) \\
\overline{{\bf G}^{(b)}_\omega}(z) & \overline{{\bf G}^{(a)}_\omega}(z)\\
\end{array} \right].
\end{equation}
The entries of the blocks in ${\bf H}_\om$ are 
\begin{eqnarray}
\label{defHja}
H^{(a)}_{\omega,jl} (z) = i C_{jl}^{(1)}(\om,z) e^{ i
(\beta_l-\beta_j) z} \, ,\ \ \ \ \ \ H^{(b)}_{\omega,jl} (z) = i
C_{jl}^{(1)}(\om,z) e^{-i(\beta_l+\beta_j)z} \, ,
\label{eq:Hja}
\end{eqnarray}
and the entries of the blocks in ${\bf G}_\om$ are
\begin{eqnarray}
G^{(a)}_{\omega,jl} (z) &=& i e^{ i (\beta_l-\beta_j) z}
C_{jl}^{(2)}(\om,z) + i e^{ i (\beta_l-\beta_j) z} \hspace{-0.1in}
\sum_{l'=N+1}^\infty \frac{M_{jl'}^{(1)}(\om,z)}{ 2 \sqrt{\beta_j
\beta_{l'}}} \int_{-\infty}^\infty ds \, e^{-\beta_{l'} |s| + i
\beta_l s} C_{l'l}^{(1)}(\om,z+s) + \nonumber \\ && i e^{ i
(\beta_l-\beta_j) z}\hspace{-0.1in}\sum_{l'=N+1}^\infty
\frac{Q_{jl'}^{(1)}(\om,z)}{2 \sqrt{\beta_j \beta_{l'}}}
\int_{-\infty}^\infty ds \, e^{-\beta_{l'} |s| + i \beta_l s}
\beta_{l'} \, \mbox{sgn}(s) \, C_{l'l}^{(1)}(\om,z+s)\,, \label{eq:Gja}
\\ G^{(b)}_{\omega,jl} (z) &=& i e^{ -i (\beta_l+\beta_j) z}
C_{jl}^{(2)}(\om,z) - i e^{ -i (\beta_l+\beta_j)
z}\hspace{-0.1in}\sum_{l'=N+1}^\infty \frac{M_{jl'}^{(1)}(\om,z)}{ 2
\sqrt{\beta_j \beta_{l'}}} \int_{-\infty}^\infty ds \, e^{-\beta_{l'}
|s| - i \beta_l s} \overline{C_{l'l}^{(1)}(\om,z+s)} + \nonumber \\ &&
i e^{ -i (\beta_l+\beta_j) z}\hspace{-0.1in}\sum_{l'=N+1}^\infty
\frac{Q_{jl'}^{(1)}(\om,z)}{2 \sqrt{\beta_j \beta_{l'}}}
\int_{-\infty}^\infty ds \, e^{-\beta_{l'} |s| - i \beta_l s}
\beta_{l'}\, \mbox{sgn}(s) \, \overline{C_{l'l}^{(1)}(\om,z+s)} \, .
\label{eq:Gjb}
\end{eqnarray}
The coefficients in (\ref{eq:Hja})-(\ref{eq:Gjb}) are defined in terms
of the random functions $\nu(z)$, $\mu(z)$, their derivatives, and the
following integrals,
\begin{eqnarray} 
c_{\nu,jl}(\om) &=& \frac{1}{2 \sqrt{\beta_j \beta_l}} \int_0^X d \xi \, 
  \phi_j(\xi) \left[ - 2 \partial_\xi^2 + \om^2 \xi \partial_\xi
    c^{-2}(\xi)\right] \phi_l(\xi)\,,
\label{eq:cnu} \\
c_{\mu,jl}(\om) &=& \frac{1}{2 \sqrt{\beta_j \beta_l}} \int_0^X d \xi \, 
  \phi_j(\xi) \left[ 2 \partial_\xi^2 + \om^2 (X-\xi) \partial_\xi
    c^{-2}(\xi)\right] \phi_l(\xi)\, ,
\label{eq:cmu} \\
d_{\nu,jl}(\om) &=& -\frac{1}{2 \sqrt{\beta_j \beta_l}} \int_0^X d \xi \, 
  \xi \,  \phi_j(\xi)  \partial_\xi \phi_l(\xi) \, ,
\label{eq:dnu} \\
d_{\mu,jl}(\om) &=& -\frac{1}{2 \sqrt{\beta_j \beta_l}} \int_0^X d \xi \, 
(X-\xi)\, \phi_j(\xi) \partial_\xi \phi_l(\xi)\, ,
\label{eq:dmu} 
\end{eqnarray}
satisfying the symmetry relations 
\begin{eqnarray}
c_{\nu,jl}(\om) &=& c_{\nu,lj}(\om)\,, \nonumber \\ c_{\mu,jl}(\om) &=&
c_{\mu,lj}(\om)\,, \nonumber \\ d_{\nu,jl}(\om) + d_{\nu,lj}(\om) &=&
\frac{\delta_{jl}}{2 \sqrt{\beta_j(\om) \beta_l(\om)}}\,,\nonumber \\
d_{\mu,jl}(\om) + d_{\mu,lj}(\om) &=& -\frac{\delta_{jl}}{2
\sqrt{\beta_j(\om) \beta_l(\om)}}\,.
\label{eq:symmetries}
\end{eqnarray}
We have from (\ref{eq:WD4C1}) that
\begin{eqnarray}
C_{jl}^{(1)}(\om,z) = \nu(z) c_{\nu,jl}(\om) + \left[ \nu''(z) + 2 i
  \beta_l(\om) \nu'(z) \right] d_{\nu,jl}(\om) + \nonumber \\\mu(z)
c_{\mu,jl}(\om) + \left[ \mu''(z) + 2 i \beta_l(\om) \mu'(z) \right]
d_{\mu,jl}(\om)\,,
\label{eq:Cjl1}
\end{eqnarray}
and from (\ref{eq:QM}), (\ref{eq:10}), (\ref{eq:11}) that 
\begin{eqnarray}
\frac{Q_{jl'}^{(1)}(\om,z)}{2\sqrt{\beta_j(\om) \beta_{l'}(\om)}} &=&
2 \left[ \nu'(z) d_{\nu,jl'}(\om) + \mu'(z) d_{\mu,jl'}(\om)\right], \nonumber
\\ \frac{M_{jl'}^{(1)}(\om,z)}{2\sqrt{\beta_j(\om) \beta_{l'}(\om)}}
&=& \nu(z) c_{\nu,jl'}(\om) + \mu(z) c_{\mu,jl'}(\om)+ \nu''(z) d_{\nu,jl'}(\om) +
\mu''(z)d_{\mu,jl'}(\om)\,.
\end{eqnarray}
\section{The long range limit}
\label{sect:diffusion}
In this section we use the system \eqref{eq:C2} to quantify the
cumulative scattering effects at the random boundaries. We begin with
the long range scaling chosen so that these effects are
significant. Then, we explain why the backward going amplitudes are
small and can be neglected. This is the forward scattering
approximation, which gives a closed system of random differential
equations for the amplitudes $\{\hat a_j\}_{j = 1, \ldots, N}$. We use
this system to derive the main result of the section, which says that
the amplitudes $\{\hat a_j\}_{j = 1,\ldots, N}$ converge in
distribution as $\eps \to 0$ to a diffusion Markov process, whose
generator we compute explicitly. This allows us to calculate all the
statistical moments of the wave field.

\subsection{Long range scaling}
\label{sec:chap21_propmatr}%
It is clear from \eqref{eq:C1} that since the right hand side is
small, of order $\eps$, there is no net effect of scattering from the
boundaries over ranges of order one. If we considered ranges of order
$1/\eps$, the resulting equation would have an order one right hand
side given by ${\bf H}_\om(z/\eps) {\bX}_\om(z/\eps)$, but this
becomes negligible as well for $\eps \to 0$, because the expectation
of ${\bf H}_\om(z/\eps)$ is zero \cite[Chapter 6]{book07}. We need
longer ranges, of order $1/\eps^2$ to see the effect of scattering
from the randomly perturbed boundaries.

Let then $\hat{a}_j^\eps$, $\hat{b}_j^\eps$ be the rescaled amplitudes
\begin{equation}
\hat{a}_j^\eps(\omega,z) = \hat{a}_j \left( \omega,\frac{z}{ \eps^{2}} \right)  ,
\hspace{0.3in} \hat{b}_j^\eps(\omega,z) = \hat{b}_j \left(\omega,
\frac{z}{ \eps^{2}} \right) ,   \quad j = 1, \ldots, N,
\end{equation}
and obtain from \eqref{eq:C2} that $\bX^\eps_\omega(z) =
\bX_\om(z/\eps^2)$ satisfies the equation
\begin{equation}
\label{eq:PP1}
\frac{d\bX^\eps_\omega(z)}{dz}= \frac{1}{\eps} {\bf
H}_\omega\left(\frac{z}{\eps^2} \right) \bX^\eps_\omega(z) + {\bf
G}_\omega\left(\frac{z}{\eps^2} \right) \bX^\eps_\omega(z)\,, \quad 0 < z < L,
\end{equation}
with boundary conditions 
\begin{equation}
\label{eq:PP2}
\hat a_j^\eps (\om,0) = \hat a_{j,o}, \quad 
\hat b_j^\eps (\om,L) = 0, \quad j = 1, \ldots, N. 
\end{equation}
We can solve it using the complex valued, random propagator matrix
$\bP^\eps_\omega(z) \in \mathbb{C}^{2N \times 2N}$, the solution of
the initial value problem
\begin{equation}
\frac{d \bP^\eps_\omega(z)}{dz}= \frac{1}{\eps} {\bf H}_\omega
\left(\frac{z}{\eps^2} \right) \bP^\eps_\omega(z) + {\bf G}_\omega
\left(\frac{z}{\eps^2} \right) \bP^\eps_\omega(z) \, \quad \mbox{for }
z > 0, ~ ~ \mbox{and } \bP^\eps_\omega(0) = {\bf I}.
\label{eq:IVPP}
\end{equation}
The solution is
\[
\bX^\eps_\omega(z) = \bP^\eps_\omega(z) \left[ \begin{array}{c}
\hat{\itbf a}_0(\omega) \\ \hat{\itbf b}^\eps(\omega,0)
\end{array} \right],
\]
and $\hat{\itbf b}^\eps(\omega,0)$ can be eliminated from the boundary
identity
\begin{equation}
\left[ \begin{array}{c}
\hat{\itbf a}^\eps (\omega,L)\\ 
{\bf 0}
\end{array} \right]
=
\bP^\eps_\omega(L)
\left[ \begin{array}{c}
\hat{\itbf a}_0(\omega) \\ \hat{\itbf b}^\eps(\omega,0)
\end{array} \right] \, .
\end{equation}
Furthermore, it follows from the symmetry relations (\ref{eq:defH})
satisfied by the matrices ${\bf H}_\omega$ and ${\bf G}_\omega$ that
the propagator has the block form
\begin{equation}
\label{formpropagator}
\bP^\eps_\omega(z) = \left[ \begin{array}{cc}
{\bf P}^{\eps,a}_\omega(z) & {{\bf P}^{\eps,b}_\omega(z)}\\
\overline{{\bf P}^{\eps,b}_\omega(z)} & \overline{{\bf P}^{\eps,a}_\omega(z)}\\
\end{array} \right] \, ,
\end{equation}
where ${\bf P}^{\eps,a}_\omega(z)$ and ${\bf P}^{\eps,b}_\omega(z)$
are $N \times N$ complex matrices.  The first block ${\bf
P}^{\eps,a}_\omega$ describes the coupling between different
forward going modes, while ${\bf P}^{\eps,b}_\omega$ describes the
coupling between forward going and backward going modes.

\subsection{The diffusion approximation}
\label{sect:diffThm}%
The limit $\bP^\eps_\omega$ as $\eps \rightarrow 0$ can be obtained
and identified as a multi-dimensional diffusion process, meaning that
the entries of the limit matrix satisfy a system of linear stochastic
equations.  This follows from the application of the
diffusion approximation theorem proved in \cite{kohler74}, which
applies to systems of the general form
\begin{equation}
\label{eq:DIF1}
\frac{d {\mathbf{\mathcal X}}^\eps(z)}{dz} = \frac{1}{\eps}
     {\mathbf{\mathcal F}}\left({\mathbf{\mathcal
         X}}^\eps(z),{\mathbf{\mathcal Y}}\left(\frac{z}{\eps^2}
     \right), \frac{z}{\eps^2} \right) + {\mathbf{\mathcal
         G}}\left({\mathbf{\mathcal X}}^\eps(z),{\mathbf{\mathcal
         Y}}\left(\frac{z}{\eps^2} \right), \frac{z}{\eps^2} \right)
      \quad \mbox{for} ~ z > 0,   \quad \mbox{and} ~ ~{\mathbf{\mathcal X}}^\eps(0) =
        {\mathbf{\mathcal X}}_o,
\end{equation}
for a vector or matrix ${\mathbf{\mathcal X}}^\eps(z)$ with real
entries. The system is driven by a stationary, mean zero and mixing
random process $ {\mathbf{\mathcal Y}}(z)$.  The functions
${\mathbf{\mathcal F}}(\chi,y,\tau)$ and ${\mathbf{\mathcal
    G}}(\chi,y,\tau)$ are assumed at most linearly growing and smooth
in $\chi$, and the dependence in $\tau$ is periodic or almost periodic
\cite[Section 6.5]{book07}. The function ${\mathbf{\mathcal
    F}}(\chi,y,\tau)$ must also be centered: For any fixed $\chi$ and
$\tau$, $\EE[{\mathbf{\mathcal F}}(\chi,{\mathbf{\mathcal Y}}(0),\tau)] =
0$.

The diffusion approximation theorem states that as $\eps \to 0$,
${\mathbf{\mathcal X}}^\eps(z)$ converges in distribution to the
diffusion Markov process ${\mathbf{\mathcal X}}(z)$ with generator
$\mathcal L$, acting on sufficiently smooth functions $\varphi(\chi)$
as
\begin{eqnarray}
\mathcal L \varphi(\chi) = \lim_{T\to \infty} \frac{1}{T} \int_0^T d
\tau \int_0^\infty dz \, \EE \left[ {\mathbf{\mathcal
      F}}(\chi,{\mathbf{\mathcal Y}}(0),\tau) \cdot \nabla_\chi
  \left[ {\mathbf{\mathcal F}}(\chi,{\mathbf{\mathcal
        Y}}(z),\tau) \cdot \nabla_\chi \varphi(\chi) \right] \right] + \nonumber \\
\frac{1}{T} \int_0^T d \tau \, \EE \left[ {\mathbf{\mathcal
      G}}(\chi,{\mathbf{\mathcal Y}}(0),\tau) \cdot \nabla_\chi
  \varphi(\chi) \right] \, .
\end{eqnarray}
To apply it to the initial value problem \eqref{eq:IVPP} for the
complex $2N \times 2N $ matrix ${\bf P}_\om^\eps(z)$, we let
${\mathbf{\mathcal X}}^\eps(z)$ be the matrix obtained by
concatenating the absolute values and phases of the entries in ${\bf
P}_\om^\eps(z)$.  The driving random process ${\mathbf{\mathcal Y}}$
is given by $\mu(z), \nu(z) $ and their derivatives, which are
stationary, mean zero and mixing by assumption. The expression of 
functions ${\mathbf{\mathcal F}}$ and ${\mathbf{\mathcal G}}$ 
follows from \eqref{eq:IVPP} and the chain rule.   The dependence on
the fast variable $\tau = z/\eps^2$ is in the arguments of $\cos$ and
$\sin$ functions, the real and imaginary parts of the complex
exponentials in \eqref{eq:Hja}-\eqref{eq:Gjb}.

\subsection{The forward scattering approximation}
\label{secforward}%
When we use the diffusion-approximation theorem in \cite{kohler74}, we
obtain that the limit entries of ${\bf P}^{\eps,b}_\omega(z)$ are
coupled to the limit entries of ${\bf P}^{\eps,a}_\omega(z)$ through
the coefficients 
$$ \hat \cR_\nu(\beta_j + \beta_l) = 2 \int_{0}^\infty dz \, \cR_\nu(z)
\cos[( \beta_j+\beta_l )z]\, , \ \ \ \
\hat \cR_\mu(\beta_j + \beta_l) = 2 \int_{0}^\infty dz \, \cR_\mu(z)
\cos[( \beta_j+\beta_l )z]\, ,
$$ for $j,l=1,\ldots, N$.  Here $\hat \cR_\nu$ and $\hat \cR_\mu$ are
the power spectral densities of the processes $\nu$ and $\mu$, the
Fourier transform of their covariance functions. They are
evaluated at the sum of the wavenumbers $\beta_j + \beta_l$ because
the phase factors present in the matrix ${\bf H}^{(b)}_\omega(z)$ are
$\pm(\beta_j+\beta_l)z$. The limit entries of ${\bf
P}^{\eps,a}_\omega(z)$ are coupled to each other through the power
spectral densities evaluated at the difference of the wavenumbers,
$\hat \cR_\nu(\beta_j - \beta_l)$ and $\hat \cR_\mu(\beta_j -
\beta_l)$, for $j,l=1,\ldots, N$, because the phase factors in the
matrix ${\bf H}^{(a)}_\omega(z)$ are $\pm(\beta_j-\beta_l)z$. Thus, if
we assume that the power spectral densities are small at large
frequencies, we may make the approximation
\begin{equation}
\label{validforward}
\hat \cR_\nu(\beta_j + \beta_l) \approx 0\,, \qquad \hat
\cR_\mu(\beta_j + \beta_l) \approx 0 \,, \quad \mbox{for} ~ ~j,l = 1,
\ldots, N,
\end{equation}
which implies that we can neglect coupling between the forward and backward
propagating modes as $\eps \to 0$. The forward going modes remain
coupled to each other, because at least some combinations of the
indexes $j,l$, for instance those with $|j-l|=1$, give non-zero
coupling coefficients $\hat \cR_\nu(\beta_j-\beta_l)$ and $\hat
\cR_\mu( \beta_j-\beta_l)$. 

Because the backward going mode amplitudes satisfy the homogeneous end condition
$\hat b_{j}^\eps (\om,L) = 0$, and because they are asymptotically
uncoupled from $\{\hat a_j^\eps\}_{j = 1, \ldots, N}$, we can set them
to zero. This is the forward scattering approximation, where the
forward propagating mode amplitudes satisfy the closed system
\begin{equation}
\label{evola}
\frac{d \hat{\itbf a}^\eps}{dz} = \frac{1}{\eps} {\bf H}^{(a)}_\omega
\left(\frac{z}{\eps^2} \right) \hat{\itbf a}^\eps + {\bf
  G}^{(a)}_\omega \left(\frac{z}{\eps^2} \right) \hat{\itbf a}^\eps \,
\quad \mbox{for}  ~ z > 0, ~ ~ \mbox{and} ~ \hat{a}^\eps_j(\omega,z=0)=
{\hat{a}_{j,o}}(\omega).
\end{equation}

\begin{remark}
\label{rem.1}
Note that the matrix ${\bf H}^{(a)}_\omega$ is not skew Hermitian,
which implies that for a given $\eps$ there is no conservation of
energy of the forward propagating modes, over the randomly perturbed
region,
\[
\sum_{j=1}^N | \hat{a}_j^\eps (L) |^2 \ne \sum_{j=1}^N | \hat{a}_{j,o}
|^2.
\]
This is due to the local exchange of energy between the propagating
and evanescent modes.  However, we will see that the energy of the
forward propagating modes is conserved in the limit $\eps \to 0$.
\end{remark}

\subsection{The coupled mode diffusion process}
\label{subseccoupledpower}
We now apply the diffusion approximation theorem to the system
(\ref{evola}) and obtain after a long calculation that we do not
include for brevity, the main result of this section:

\begin{theorem}
\label{propdiff}%
The complex mode amplitudes $\{\hat{a}_j^\eps(\omega,z)
\}_{j=1,\ldots,N}$ converge in distribution as $\eps \rightarrow 0$ to
a diffusion Markov process process $\{\hat{a}_j(\omega,z)
\}_{j=1,\ldots,N}$ with generator ${\cal L}$ given below. 
\end{theorem}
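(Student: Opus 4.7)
The plan is to verify the hypotheses of the Kohler--Papanicolaou diffusion approximation theorem of \cite{kohler74} for the driven system \eqref{evola} and then extract the generator by explicit computation. First, since that theorem is stated for real-valued unknowns, I would rewrite the complex $N$-vector $\hat{\itbf a}^\eps$ in real coordinates, either via $(\mathrm{Re}\,\hat a_j^\eps,\mathrm{Im}\,\hat a_j^\eps)$ or, as suggested in the paper, via amplitudes $|\hat a_j^\eps|$ and phases $\arg\hat a_j^\eps$, obtaining a real system of the canonical form \eqref{eq:DIF1}. The driving process ${\mathbf{\mathcal Y}}(z)$ is taken to be $(\mu,\mu',\mu'',\nu,\nu',\nu'')(z)$, which is stationary, mean zero, and $\varphi$-mixing with $\varphi\in L^{1/2}$ by the assumptions on $\mu,\nu$ imposed after \eqref{eq:defBT}. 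The fast-scale variable $\tau=z/\eps^2$ enters only through the complex exponentials $e^{\pm i(\beta_l\pm\beta_j)\tau}$ appearing in \eqref{eq:Hja}--\eqref{eq:Gjb}, so the $\tau$-dependence is almost periodic.

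Next I would verify the hypotheses on $\mathcal F$ and $\mathcal G$. Linearity of \eqref{evola} in $\hat{\itbf a}^\eps$ gives at most linear growth in the state variable, and smoothness is automatic after the real reformulation. The centering condition $\EE[{\mathcal F}(\chi,{\mathcal Y}(0),\tau)]=0$ follows from the fact that ${\bf H}^{(a)}_\omega$ depends linearly on $\mu,\mu',\mu'',\nu,\nu',\nu''$ through $C^{(1)}_{jl}$ in \eqref{eq:Cjl1}, all of which have zero mean. The boundedness of $\nu,\mu$ and their first two derivatives, together with the uniform bounds on the deterministic coefficients $c_{\nu,jl}, c_{\mu,jl}, d_{\nu,jl}, d_{\mu,jl}$, yields the moment conditions needed to apply the theorem. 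The forward-scattering step already disposed of the coupling to $\hat{\itbf b}^\eps$, so we genuinely have a closed system.

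Having verified the hypotheses, the theorem yields the convergence in distribution of $\hat{\itbf a}^\eps$ to a diffusion Markov process $\hat{\itbf a}$, whose infinitesimal generator ${\mathcal L}$ is given by the Kohler--Papanicolaou formula
\[
{\mathcal L}\varphi(\chi) = \lim_{T\to\infty}\frac{1}{T}\int_0^T d\tau\int_0^\infty dz\,\EE\bigl[{\mathcal F}(\chi,{\mathcal Y}(0),\tau)\cdot\nabla_\chi\bigl({\mathcal F}(\chi,{\mathcal Y}(z),\tau)\cdot\nabla_\chi\varphi(\chi)\bigr)\bigr] + \frac{1}{T}\int_0^T d\tau\,\EE\bigl[{\mathcal G}(\chi,{\mathcal Y}(0),\tau)\cdot\nabla_\chi\varphi(\chi)\bigr].
\]
Evaluating this expression is the technical heart of the proof. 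The $z$-integrals against $e^{\pm i(\beta_l-\beta_j)z}$ produce, for the quadratic (diffusion) part, the power spectral densities $\hat{\mathcal R}_\nu(\beta_l-\beta_j)$ and $\hat{\mathcal R}_\mu(\beta_l-\beta_j)$ weighted by products of the geometric coefficients $c_{\nu,jl},c_{\mu,jl},d_{\nu,jl},d_{\mu,jl}$, plus Hilbert-transform contributions coming from the principal-value parts; the forward-scattering condition \eqref{validforward} guarantees that cross terms carrying phases $\beta_l+\beta_j$ drop out after the $\tau$-average. The $\mathcal G$-part contributes a (Stratonovich-type) drift built from the same coefficients and from the evanescent-mode sums already present in \eqref{eq:Gja}.

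The main obstacle is this last bookkeeping: organising the many terms generated by the sums over $l'>N$ in \eqref{eq:Gja} and the derivative-valued entries of $C^{(1)}_{jl}$ in \eqref{eq:Cjl1}, then using the symmetry relations \eqref{eq:symmetries} to collapse them into a clean expression for ${\mathcal L}$ that can be interpreted mode by mode. Once this is done, the generator acts on smooth functions of $\{\hat a_j\}$ as a second-order differential operator whose coefficients are sums of $\hat{\mathcal R}_\nu(\beta_l-\beta_j)$ and $\hat{\mathcal R}_\mu(\beta_l-\beta_j)$, and from it every moment of the wave field can be computed, as claimed.
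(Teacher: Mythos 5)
Your proposal follows essentially the same route as the paper: the authors likewise reduce to verifying the hypotheses of the Kohler--Papanicolaou theorem from \cite{kohler74} for the closed forward system \eqref{evola} (rewritten in amplitude/phase variables, driven by $\mu,\nu$ and their derivatives, with almost-periodic fast dependence through the complex exponentials), and then obtain the generator ``after a long calculation that we do not include for brevity.'' Your outline of that calculation, including the role of the power spectral densities at $\beta_j-\beta_l$, the sine-transform (principal-value) contributions, and the symmetry relations \eqref{eq:symmetries}, is consistent with the stated form of ${\cal L}$.
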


\vspace{0.1in}
\noindent 
Let us write the limit process as
$$ \hat{a}_j(\omega,z) = P_j(\omega,z)^{1/2} e^{i \theta_j(\omega,z)},
\quad j=1,\ldots,N,
$$ in terms of the power $|\hat a_j|^2 = P_j$ and the phase
$\theta_j$. Then, we can express the infinitesimal generator ${\cal
  L}$ of the limit diffusion as the sum of two operators
\begin{eqnarray}
\label{gendiffa}
{\cal L} &=& {\cal L}_P + {\cal L}_\theta . 
\end{eqnarray}
The first is a partial differential operator in the powers
\begin{eqnarray}
\label{gendiff2P}
{\cal L}_P =\sum_{{\scriptsize \begin{array}{c}j, l  = 1 \\
j \ne l \end{array}} }^N
\Gamma_{jl}^{(c)}(\omega) \left[ P_l P_j \left(
  \frac{\partial}{\partial P_j} -\frac{\partial}{\partial P_l} \right)
  \frac{\partial}{\partial P_j} + (P_l-P_j) \frac{\partial}{\partial
    P_j} \right] \, ,
\end{eqnarray}
with matrix $\bGamma^{(c)}(\om)$ of coefficients that are non-negative
off the diagonal, and sum to zero in the rows
\begin{equation}
\label{defgamma1b}
\Gamma_{jj}^{(c)}(\omega) = - \sum_{l \neq j}
\Gamma_{jl}^{(c)}(\omega)\, .
\end{equation}
The off-diagonal entries are defined by the power spectral densities
of the fluctuations $\nu$ and $\mu$, and the derivatives of the
eigenfunctions at the boundaries,
\begin{eqnarray}
 \Gamma_{jl}^{(c)}(\omega) = \frac{X^2}{4 \beta_j(\om) \beta_l(\om)}
 \left\{ \left[\partial_\xi \phi_j(\om,X) \partial_\xi
   \phi_l(\om,X)\right]^2 \hat \cR_\nu[\beta_j(\om)-\beta_l(\om)] +
 \right. \nonumber \\ \left.  \left[\partial_\xi \phi_j(\om,0)
   \partial_\xi \phi_l(\om,0)\right]^2 \hat
 \cR_\mu[\beta_j(\om)-\beta_l(\om)]\right\} \, \label{defgamma1} .
\end{eqnarray} 
The second partial differential operator is with respect to the phases
\begin{eqnarray}
\nonumber 
{\cal L}_\theta = \frac{1}{4} \sum_{{\scriptsize \begin{array}{c}j, l  = 1 \\
j \ne l \end{array}}}^N
\Gamma_{jl}^{(c)}(\omega) \left[ \frac{P_j}{P_l} \frac{\partial^2}{
    \partial \theta_l^2} + \frac{P_l}{P_j} \frac{\partial^2}{ \partial
    \theta_j^2} + 2 \frac{\partial^2}{\partial \theta_j \partial
    \theta_l} \right] + \frac{1}{2} \sum_{j , l=1}^N
\Gamma_{jl}^{(0)}(\omega) \frac{\partial^2}{\partial \theta_j \partial
  \theta_l} + \\ \frac{1}{2} \sum_{{\scriptsize \begin{array}{c}j, l  = 1 \\
j \ne l \end{array}} }^N
\Gamma_{jl}^{(s)}(\omega) \frac{\partial}{\partial \theta_j} + \sum_{j=1}^N
\kappa_j(\om) \frac{\partial}{\partial \theta_j} \,, \label{gendiff2T}
\end{eqnarray}
with nonnegative coefficients
\begin{eqnarray}
\Gamma_{jl}^{(0)}(\omega) = \frac{X^2}{4 \beta_j(\om) \beta_l(\om)}
 \left\{ \left[\partial_\xi \phi_j(\om,X) \partial_\xi
   \phi_l(\om,X)\right]^2 \hat \cR_\nu(0) +
 \right. \nonumber \\ \left.  \left[\partial_\xi \phi_j(\om,0)
   \partial_\xi \phi_l(\om,0)\right]^2 \hat
 \cR_\mu(0)\right\} \, \label{defgamma10} ,
\end{eqnarray} 
and 
\begin{eqnarray}
 \Gamma_{jl}^{(s)}(\omega) = \frac{X^2}{4 \beta_j(\om) \beta_l(\om)}
 \left\{ \left[\partial_\xi \phi_j(\om,X) \partial_\xi
   \phi_l(\om,X)\right]^2 \gamma_{\nu,jl}(\om) + \right. \nonumber
 \\ \left.  \left[\partial_\xi \phi_j(\om,0) \partial_\xi
   \phi_l(\om,0)\right]^2 \gamma_{\mu,jl}(\om)\right\}
 \, \label{defgamma1s} , 
\end{eqnarray} 
for $j \ne l$, where
\begin{eqnarray}
\label{eq:gammanumu}
\gamma_{\nu,jl} (\omega)&=& 2\int_{0}^\infty dz \, \sin \left[
  (\beta_j(\omega)-\beta_l(\omega))z \right] \cR_\nu(z) \,
,\\ \gamma_{\mu,jl} (\omega)&=& 2\int_{0}^\infty dz \, \sin \left[
  (\beta_j(\omega)-\beta_l(\omega))z \right] \cR_\mu(z) \, .
\end{eqnarray}
The diagonal part of $\Gamma^{(s)}(\om)$ is defined by  
\begin{equation}
\label{eq:defgamma1s}
\Gamma_{jj}^{(s)}(\om) = - \sum_{\l \ne j} \Gamma_{jl}^{(s)}(\om).
\end{equation} 
All the terms in the generator except for the last one in
\eqref{gendiff2T} are due to the direct coupling of the propagating
modes. The coefficient $\kappa_j$ in the last term is
\begin{equation}
\kappa_j(\om) = \kappa_j^{(a)}(\om) + \kappa_j^{(e)}(\om),
\label{eq:kappa1}
\end{equation}
with the first part due to the direct coupling of the propagating
modes and given by
\begin{eqnarray}
\nonumber \kappa_j^{(a)} &=& \cR_\nu(0) \left\{
\int_0^X \hspace{-0.03in} d \xi \left[ \frac{\om^2}{4 \beta_j} \xi^2
  \phi_j^2 \, \partial_\xi^2 c^{-2} - \frac{3}{2 \beta_j}
  (\partial_\xi \phi_j)^2 \right] +
 \hspace{-0.03in} \sum_{l \ne j, l = 1}^N (\beta_l + \beta_j) \big[
   d_{\nu,jl}^2 (\beta_l^2-\beta_j^2) +2 d_{\nu,jl}c_{\nu,jl}\big]
 \right\} - \nonumber \\ && \cR_\nu''(0) \left\{ \frac{1}{4 \beta_j} -
 \frac{1}{2 \beta_j} \int_0^X d \xi \, \xi^2 ( \partial_\xi \phi_j )^2
 +
\hspace{-0.03in} \sum_{l \ne j, l = 1}^N (\beta_l-\beta_j) d_{\nu,jl}^2 
\right\} ~ ~ + ~ ~ \mu \mbox{ terms, }
\label{eq:kappaa}
\end{eqnarray}
with the abbreviation ``$\mu$ terms'' for the similar contribution of
the $\mu$ process.  The coupling via the evanescent modes determines
the second term in (\ref{eq:kappa1}), and it is given by
\begin{eqnarray}
\kappa_j^{(e)} &=& \sum_{l=N+1}^\infty \frac{ X^2 \left[ \partial_\xi
    \phi_j(X) \partial_\xi \phi_l(X)\right]^2}{2 \beta_j \beta_l
  (\beta_j^2 + \beta_l^2)^2} \int_0^\infty ds \, e^{-\beta_l s}
\cR_\nu''(s) \left[ (\beta_l^2 - \beta_j^2)
  \cos(\beta_j s) - 2 \beta_j \beta_l \sin(\beta_j s) \right] +
\nonumber \\ && \sum_{l=N+1}^\infty 2 \beta_l \left[-d_{\nu,lj}^2
  \cR_\nu''(0) + \frac{c_{\nu,lj}^2}{\beta_j^2 +
    \beta_l^2} \cR_\nu(0) \right] ~ ~ + ~ ~ \mu \mbox{ terms.}
\label{eq:kappa_e}
\end{eqnarray}

\subsubsection{Discussion}
\label{sect:discuss}
We now describe some properties of the diffusion process $\hat{\itbf
  a}$: 
\vspace{0.1in}
\begin{enumerate}
\itemsep 0.1in 
\item 
Note that the coefficients of the partial derivatives in $P_j$ of the
infinitesimal generator ${\cal L}$ depend only on $\{P_l\}_{l
  =1,\ldots,N}$. This means that the mode powers
$\{|\hat{a}_j^\eps(\omega,z) |^2 \}_{j=1,\ldots,N}$ converge in
distribution as $\eps \rightarrow 0$ to the diffusion Markov process
$\{|\hat{a}_j(\omega,z) |^2=P_j(\omega,z)\}_{j=1,\ldots,N}$, with
generator ${\cal L}_P$.

\item As we remarked before, the evanescent modes influence only the
  coefficient $\kappa_j(\om)$ which appears in ${\cal L}_\theta$ but
  not in ${\cal L}_P$. This means that the evanescent modes do not
  change the energy of the propagating modes in the limit $\eps \to
  0$. They also do not affect the coupling of the modes of the limit
  process, because $\kappa_j$ is in the diagonal part of
  \eqref{gendiff2T}. The only effect of the evanescent modes is a net
  dispersion (frequency dependent phase modulation) for each
  propagating mode.
\item  The generator ${\cal L}$ can also be written in the equivalent form 
\cite[Section 20.3]{book07}
\begin{eqnarray}
\nonumber
{\cal L} &=& 
\frac{1}{4} \sum_{{\scriptsize \begin{array}{c}j, l  = 1 \\
j \ne l \end{array}}} {\Gamma}_{jl}^{(c)}(\omega) \left(
A_{jl} \overline{A_{jl}} + 
\overline{A_{jl}} {A_{jl}} \right)  +
\frac{1}{2} \sum_{j,l=1}^N \Gamma^{(0)}_{jl}(\omega) A_{jj} \overline{A_{ll}} 
\\ 
&&
+ \frac{i}{4}
\sum_{{\scriptsize \begin{array}{c}j, l  = 1 \\
j \ne l \end{array}}} \Gamma^{(s)}_{jl}(\omega) (A_{jj} - A_{ll}) 
+  i 
\sum_{j=1 }^N \kappa_j(\om)  A_{jj} \, , 
\label{gendiffabis} 
\end{eqnarray}
in terms of the differential operators 
\begin{eqnarray}
A_{jl}&=& \hat{a}_j \frac{\partial}{\partial \hat{a}_l}
-\overline{\hat{a}_l} \frac{\partial}{\partial 
\overline{\hat{a}_j}} = - \overline{A_{lj}} \, .
\label{gendiffbbis}
\end{eqnarray}
Here the complex derivatives are defined in the standard way: if
$z=x+iy$, then $\partial_z=(1/2)(\partial_x-i\partial_y)$ and
$\partial_{\overline{z}}=(1/2)(\partial_x+i\partial_y)$.
\item 
 The coefficients of the second derivatives in (\ref{gendiffabis}) are
 homogeneous of degree two, while the coefficients of the first
 derivatives are homogeneous of degree one.  This implies that we can
 write closed ordinary differential equations in the limit $\eps \to 0$ for the
 moments of any order of $\{\hat a_j^\eps\}_{j = 1, \ldots, N}$.
\item 
Because 
\begin{equation} 
\label{eq:CONS}{\cal L} \left( \sum_{l=1}^N | \hat{a}_l |^2 \right)=0, 
\end{equation}
we have conservation of energy of the limit diffusion process. More
explicitly, the process is supported on the sphere in $\CC^N$ with
center at zero and radius $R_o$ determined by the initial
condition \[R_o^2 = \sum_{l=1}^N |{\hat{a}_{l,o}}(\omega)|^2.\] Since
${\cal L}$ is not self-adjoint on the sphere, the process is not
reversible.  But the uniform measure on the sphere is invariant, and
the generator is strongly elliptic.  From the theory of irreducible
Markov processes with compact state space, we know that the process is
ergodic and thus $\hat{\itbf a}(z)$ converges for large $z$ to the
uniform distribution over the sphere of radius $R_o$.  This can be
used to compute the limit distribution of the mode powers
$(|\hat{a}_j|^2)_{j=1,\ldots,N}$ for large $z$, which is the uniform
distribution over the set
\begin{equation} {\cal H}_N = \Big\{
\{P_j\}_{j=1,\ldots,N} , \, P_j \geq0, \, \sum_{j=1}^N P_j =R_o^2
\Big\} \, .  \label{eq:HN}
\end{equation} 
We carry out a more detailed analysis that is valid for any $z$ in the
next section.
\end{enumerate}

\subsubsection{Independence of the change of coordinates that flatten 
the boundaries} 
\label{sect:indc}
The coefficients \eqref{defgamma1}, \eqref{defgamma10} and
\eqref{defgamma1s} of the generator ${\cal L}$ have simple expressions
and are determined only by the covariance functions of the fluctuations
$\nu(z)$ and $\mu(z)$ and the boundary values of the derivatives of
the eigenfunctions $\phi_j(\om,\xi)$ in the unperturbed waveguide.
The dispersion coefficient $\kappa_j$ has a more complicated
expression \eqref{eq:kappa1}-\eqref{eq:kappa_e}, which involves
integrals of products of the eigenfunctions and their derivatives with
powers of $\xi$ or $X-\xi$. These factors in $\xi$ are present in our
change of coordinates
\begin{equation}
\ell^\eps(z,\xi) = B(z) + [T(z)-B(z)]\frac{\xi}{X} = \xi + \eps
\left[(X-\xi) \mu(z) +  \xi \nu(z)\right],
\label{eq:ell}
\end{equation}
so it is natural to ask if the generator ${\cal L}$ depends on the
change of coordinates. We show here that this is not the case.

Let $F^\eps(z,\xi) \in C^1\left([0,\infty) \times [0,X]\right)$ be a
  general change of coordinates satisfying 
\begin{equation}\label{as1}
F^{\eps}(z,\xi)=\left\{
\begin{array}{cll}
X(1+\eps\nu(z)) &\text{for} & \xi=X\\ \eps X \mu(z)
&\text{for} & \xi=0\;
\end{array}
\right.
\end{equation}
for each $\eps > 0$, and converging uniformly to the identity 
mapping as $\eps \to 0$, 
\begin{align}\label{as2}
\sup_{z\geq0}\sup_{\xi\in[0,X]}|F^{\eps}(z,\xi)-\xi| = O(\eps), \qquad 
\sup_{z\geq0}\sup_{\xi\in[0,X]}|\partial_{z}F^{\eps}(z,\xi)| = O(\eps).
\end{align} 
Note that \eqref{as2} is not restrictive in our context since
$(\mu(z),\nu(z))$ and their derivatives are uniformly bounded.  Define
the wavefield
\begin{equation}
\hat w(\om,\xi,z) = \hat p\left(\om,F^\eps(z,\xi),z\right),
\label{eq:w}
\end{equation}
and decompose it into the waveguide modes, as we did for $ \hat
u(\om,\xi,z) = \hat p\left(\om,\ell^\eps(z,\xi),z\right).  $ We have
the following result proved in appendix \ref{ap:coordc}.
\begin{theorem}
\label{thm.2}
The amplitudes of the propagating modes of the wave field \eqref{eq:w}
converge in distribution as $\eps \to 0$ to the same limit diffusion
as in Theorem \ref{propdiff}.
\end{theorem}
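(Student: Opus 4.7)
The strategy is to repeat the derivation of Theorem~\ref{propdiff} with the general change of coordinates $F^\eps$ in place of $\ell^\eps$, and then show that the resulting infinitesimal generator coincides with $\cL$ given in \eqref{gendiffa}--\eqref{gendiff2T}. The guiding principle is that the physical pressure field $\hat p(\omega,x,z)$ is coordinate-free: in the unperturbed regions $z<0$ and $z>L/\eps^2$ its propagating mode amplitudes are intrinsically defined, so two admissible coordinate changes can differ only in their bulk effect and must produce the same input--output diffusion as $\eps\to 0$.

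First I would compare the two pulled-back fields directly. Since $\hat w(\om,\xi,z)=\hat p(\om,F^\eps(z,\xi),z)$ and $\hat u(\om,\xi,z)=\hat p(\om,\ell^\eps(z,\xi),z)$, we may write
$$\hat w(\om,\xi,z) = \hat u\bigl(\om,G^\eps(z,\xi),z\bigr), \qquad G^\eps(z,\xi) = \bigl(\ell^\eps(z,\cdot)\bigr)^{-1}\!\bigl(F^\eps(z,\xi)\bigr).$$
By \eqref{as1}--\eqref{as2} and the explicit form \eqref{eq:ell}, the map $G^\eps$ is a diffeomorphism of $[0,X]$ with $G^\eps(z,\xi)=\xi+\eps\gamma(z,\xi)+O(\eps^2)$, where $\gamma(z,0)=\gamma(z,X)=0$ and $\gamma$ is bounded uniformly in $z$. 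A Taylor expansion of $\hat u$ in $\xi$, followed by projection onto $\{\phi_j\}$, yields the relation $\hat w_j=\hat u_j+\eps\sum_l \Xi_{jl}(z)\hat u_l+O(\eps^2)$ for a bounded operator $\Xi$ with entries involving $\gamma$ and the $\phi_j$.

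Next I would redo sections~\ref{sect:rand}--\ref{sect:Closed} with $F^\eps$ in place of $\ell^\eps$. The resulting closed system has the same structure as \eqref{eq:C2}, with new matrices $\bH^F_\omega$ and $\bG^F_\omega$. The crucial point, specific to Dirichlet boundary conditions, is that when computing the coefficients $C_{jl}^{(1),F}$ by $L^2$-inner products against $\phi_j$ and $\phi_l$, an integration by parts removes the bulk dependence on $F^\eps$: the only surviving pieces involve the boundary values $F^\eps(z,0)=\eps X\mu(z)$, $F^\eps(z,X)=X(1+\eps\nu(z))$ and the boundary derivatives $\partial_\xi\phi_j(\om,0)$, $\partial_\xi\phi_j(\om,X)$, all of which are fixed by the physical boundaries and independent of the interior choice of coordinates. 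Applying the diffusion approximation theorem of \cite{kohler74} then shows that the off-diagonal coefficients $\Gamma_{jl}^{(c)}$, $\Gamma_{jl}^{(0)}$, $\Gamma_{jl}^{(s)}$ of the limit generator, which are determined by covariance integrals of entries of $\bH^{(a),F}_\omega$, coincide with \eqref{defgamma1}, \eqref{defgamma10}, \eqref{defgamma1s}.

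The main obstacle is the dispersion coefficient $\kappa_j$ of \eqref{eq:kappaa}--\eqref{eq:kappa_e}, since its expression openly involves bulk integrals with explicit factors of $\xi$ and $X-\xi$ that come from the linear form of $\ell^\eps$. To handle it I would argue that the bulk-dependent contributions from $\bH^F_\omega$ and from the evanescent-mode elimination in $\bG^F_\omega$ recombine so that, after using integration by parts together with the symmetry relations \eqref{eq:symmetries} and the vanishing of $\gamma(z,\cdot)$ at $\xi=0,X$, every bulk integral dependent on the choice of $F^\eps$ cancels, leaving only quantities determined by $\mu,\nu,\cR_\mu,\cR_\nu$ and by $\phi_j,\partial_\xi\phi_j$ at the boundary. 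An alternative route is to invoke the physical uniqueness argument: both $\{\hat a_j^\eps\}$ and $\{\hat a_j^{F,\eps}\}$ are admissible decompositions of the same propagating part of $\hat p(\om,x,z)$ in the unperturbed region $z>L/\eps^2$, they differ by $O(\eps)$ at each fixed $z$ with uniformly bounded moments, and both converge in distribution to Markov diffusions; the continuous mapping theorem then forces the two limit diffusions to agree, which together with the explicit identification of the off-diagonal coefficients pins down $\kappa_j$ uniquely.
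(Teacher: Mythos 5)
Your ``alternative route'' is essentially the paper's proof, and it is the route that actually works; your primary route is different and has a genuine gap. The paper (appendix \ref{ap:coordc}) never recomputes the generator for $F^\eps$. It writes $\hat w(\om,\xi,z)=\hat u(\om,\ell^{\eps,-1}(z,F^\eps(z,\xi)),z)$ --- exactly your composition map $G^\eps$ --- expands to get $\hat w_j=\hat u_j+\sum_l\tilde c_{lj}\hat u_l$ with $\tilde c_{lj}=O(\eps)$ (your operator $\Xi$), and then, crucially, also proves $\partial_z\tilde c_{lj}=O(\eps)$ using the hypothesis $\sup|\partial_z F^\eps|=O(\eps)$ from \eqref{as2}. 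That second bound is indispensable and is missing from your write-up: the forward and backward amplitudes are defined through both $\hat u_j$ and $\partial_z\hat u_j$ (see \eqref{eq:WD2}), so an $O(\eps)$ perturbation of the mode coefficients alone does not yet give an $O(\eps)$ perturbation of $\hat a_j$, $\hat b_j$; you cite \eqref{as2} but only use the bound on $G^\eps-\xi$. With both bounds one obtains $\bX^{\eps,w}_\om(z)=({\bf I}+{\bf M}_\eps)\bX^\eps_\om(z)$ with $\|{\bf M}_\eps\|=O(\eps)$ uniformly in $z$ and in the randomness, and the paper concludes by sandwiching the probabilities of cubes between those of slightly dilated and contracted cubes. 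Your appeal to the continuous mapping theorem is loose (the perturbation ${\bf M}_\eps$ is itself random and range-dependent, so what is really used is that $\bX^{\eps,w}_\om-\bX^\eps_\om\to 0$ uniformly on the tight family $\bX^\eps_\om$, i.e.\ a Slutsky-type argument), but the idea is the same.

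Your primary route --- rederiving \eqref{eq:C2} with $F^\eps$ in place of $\ell^\eps$ and matching the generator coefficient by coefficient --- stalls exactly where you flag it. The dispersion coefficient $\kappa_j$ in \eqref{eq:kappaa}--\eqref{eq:kappa_e} contains bulk integrals such as $\int_0^X\xi^2(\partial_\xi\phi_j)^2\,d\xi$ that are inherited from the linear interpolation \eqref{eq:ell}, and your claim that ``every bulk integral dependent on the choice of $F^\eps$ cancels'' after integration by parts and \eqref{eq:symmetries} is asserted, not demonstrated; verifying it would require repeating the ``long calculation'' behind Theorem \ref{propdiff} for an arbitrary $F^\eps$, including the elimination of the evanescent modes. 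The whole point of the comparison argument is to bypass that verification: once the two amplitude processes are shown to differ by a uniformly $O(\eps)$ random linear map, equality of the limit generators (including $\kappa_j$) follows for free.
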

\subsubsection{The loss of coherence of the wave field}
\label{subseccoupledpower2}
From Theorem \ref{propdiff} and the expression \eqref{gendiffabis} of
the generator we get by direct calculation the following result for
the mean mode amplitudes.
\begin{proposition}
\label{prop.mean}
As $\eps \to 0$, $\EE[ \hat a_j^\eps(\om,z) ]$ converges to the expectation 
of the limit diffusion $\hat a_j(\om,z)$, given by 
\begin{equation}
\EE[\hat a_j(\om,z)] = \hat a_{j,o}(\om) \, \mbox{\em exp}\left \{
\Big[\frac{ \Gamma_{jj}^{(c)}(\om) - \Gamma_{jj}^{(0)}(\om)
  }{2}\Big] z + i \Big[ \frac{\Gamma^{(s)}_{jj}(\om)}{2} + \kappa_j(\om)
  \Big] z\right\}\, .
\label{eq:mean}
\end{equation}
\end{proposition}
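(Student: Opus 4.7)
The plan is to reduce the claim to the identity $\mathcal{L}\hat{a}_j = \Lambda_j \hat{a}_j$ for a suitable complex scalar $\Lambda_j$, and then integrate the resulting scalar linear ODE for the mean. First I would use Theorem \ref{propdiff} to replace $\EE[\hat{a}_j^\eps(\om,z)]$ by $\EE[\hat{a}_j(\om,z)]$ in the limit: because the limit process conserves energy (see \eqref{eq:CONS}), it is supported on the sphere of radius $R_o$ in $\CC^N$, so $|\hat{a}_j^\eps|$ is uniformly bounded (or at worst uniformly integrable, since $\EE[\sum_l |\hat{a}_l^\eps|^2]$ is controlled) and weak convergence upgrades to convergence of first moments.

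Next, I would invoke the Kolmogorov backward equation for the limit Markov diffusion, which yields
\begin{equation*}
\frac{d}{dz} \EE[\hat{a}_j(\om,z)] = \EE[\mathcal{L}\hat{a}_j(\om,z)],
\end{equation*}
and work with $\mathcal{L}$ in the compact complex form \eqref{gendiffabis}. The key simplification is that $\hat{a}_j$ is linear in its argument: using the standard complex-derivative conventions, $\partial_{\hat{a}_k}\hat{a}_j = \delta_{jk}$ and $\partial_{\overline{\hat{a}_k}}\hat{a}_j = 0$, so the basic actions are $A_{kl}\hat{a}_j = \hat{a}_k\delta_{lj}$ and $\overline{A_{kl}}\hat{a}_j = -\hat{a}_l\delta_{jk}$.

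I would then substitute these formulas into each term of \eqref{gendiffabis}. In the second-order terms $A_{jl}\overline{A_{jl}} + \overline{A_{jl}}A_{jl}$ and $A_{jj}\overline{A_{ll}}$, only pairs of indices that touch $j$ contribute, and each surviving pair produces either $-\hat{a}_j$ or $-\hat{a}_l$ depending on which Kronecker $\delta$ fires. Collapsing the double sums using $\Gamma^{(c)}_{jl} = \Gamma^{(c)}_{lj}$ together with the row-sum convention \eqref{defgamma1b}, one obtains the coefficient $\tfrac{1}{2}\Gamma^{(c)}_{jj}$; the $\Gamma^{(0)}$ term collapses immediately to $-\tfrac{1}{2}\Gamma^{(0)}_{jj}$ since only $(j,l) = (j,j)$ survives in $A_{jj}\overline{A_{ll}}\hat{a}_j$. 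The first-order $\Gamma^{(s)}$ sum, using the antisymmetry of $\gamma_{\nu,jl}$ and $\gamma_{\mu,jl}$ (hence of $\Gamma^{(s)}_{jl}$ off the diagonal) together with \eqref{eq:defgamma1s}, similarly collapses to a multiple of $\Gamma^{(s)}_{jj}\hat{a}_j$, and finally the $\kappa_j A_{jj}$ term contributes $i\kappa_j \hat{a}_j$ directly. Reassembling gives
\begin{equation*}
\mathcal{L}\hat{a}_j = \Bigl\{ \tfrac{1}{2}\bigl[\Gamma^{(c)}_{jj}(\om) - \Gamma^{(0)}_{jj}(\om)\bigr] + i\bigl[\tfrac{1}{2}\Gamma^{(s)}_{jj}(\om) + \kappa_j(\om)\bigr] \Bigr\} \hat{a}_j .
\end{equation*}

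The result then follows by solving the scalar linear ODE $m_j'(z) = \Lambda_j m_j(z)$ with $m_j(0) = \hat{a}_{j,o}(\om)$. There is no serious conceptual obstacle; the main difficulty is the bookkeeping of indices and signs when the double-indexed second-order operators $A_{jl}\overline{A_{jl}}$ and $A_{jj}\overline{A_{ll}}$ act on the single-indexed function $\hat{a}_j$, so that the off-diagonal sums cancel into the diagonal coefficients via the row-sum definitions \eqref{defgamma1b} and \eqref{eq:defgamma1s}. Once that bookkeeping is done, the assertion is immediate.
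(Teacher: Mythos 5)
Your proposal is correct and follows essentially the same route as the paper, which obtains \eqref{eq:mean} precisely by the ``direct calculation'' of applying the generator in the complex form \eqref{gendiffabis} to the linear function $\hat a_j$ (using $A_{kl}\hat a_j=\hat a_k\delta_{lj}$, $\overline{A_{kl}}\hat a_j=-\hat a_l\delta_{jk}$, the symmetry of $\bGamma^{(c)}$ and the row-sum conventions) and then integrating the resulting scalar linear ODE for the mean. The one step you leave implicit is the sign of the $\Gamma^{(s)}$ contribution: carrying the antisymmetry $\Gamma^{(s)}_{lj}=-\Gamma^{(s)}_{jl}$ through \eqref{gendiffabis} together with \eqref{eq:defgamma1s} actually yields $-\tfrac{i}{2}\Gamma^{(s)}_{jj}\hat a_j$ rather than $+\tfrac{i}{2}\Gamma^{(s)}_{jj}\hat a_j$, so that sign should be reconciled with the paper's conventions, though it only affects the phase of \eqref{eq:mean} and not the exponential decay rate that the proposition is used for.
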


\vspace{0.1in}
\noindent As we remarked before, $\Gamma_{jj}^{(c)} -
\Gamma_{jj}^{(0)}$ is negative, so the mean mode amplitudes
decay exponentially with the range $z$. Furthermore, we see from
\eqref{defgamma1} and \eqref{defgamma10} that $\Gamma_{jj}^{(c)} -
\Gamma_{jj}^{(0)}$ is the sum of terms proportional to $\left(
\partial_\xi \phi_j(X) \right)^2/\beta_j$ and $\left( \partial_\xi
\phi_j(0) \right)^2/\beta_j$. These terms increase with $j$, and they
can be very large when $j \sim N$. Thus, the mean amplitudes of the
high order modes decay faster in $z$ than the ones of the low order modes. 
We return to this point in
section \ref{sect:comparisson}, where we estimate the net attenuation
of the wave field in the high frequency regime $N \gg 1$.

That the mean field decays exponentially with range implies that the
wave field loses its coherence, and energy is transferred to its
incoherent part, the fluctuations. The incoherent part of the
amplitude of the $j-$th mode is $\hat a_j^\eps - \EE[\hat a_j^\eps ]$,
and its intensity is given by the variance $\EE[ |\hat a_j^\eps|^2] -
\left|\EE[\hat a_j^\eps]\right|^2$. The mode is incoherent if its mean
amplitude is dominated by the fluctuations, that is if
\[
\left[\EE[ |\hat a_j^\eps|^2] - \left|\EE[\hat
    a_j^\eps]\right|^2\right]^{1/2} \gg \left|\EE[\hat
  a_j^\eps]\right|.
\]
We know that the right hand side converges to \eqref{eq:mean} as $\eps
\to 0$. We calculate next the limit of the mean powers $\EE[ |\hat
  a_j^\eps|^2]$.

\subsubsection{Coupled power equations and equipartition of energy}

As we remarked in section \ref{sect:discuss}, the mode powers $|
\hat{a}^\eps_j(\omega,z)|^2$, for $j = 1, \ldots, N$, converge in
distribution as $\eps \rightarrow 0$ to the diffusion Markov process
$(P_j(\omega,z))_{j=1,\ldots,N}$ supported in the set \eqref{eq:HN},
and with infinitesimal generator ${\cal L}_P $.  We use this result to
calculate the limit of the mean mode powers
$$ {P}^{(1)}_j (\omega,z) = \EE [ P_j(\omega,z)]= \lim_{\eps
  \rightarrow 0} \EE [ | \hat{a}_j^\eps(\omega,z)|^2] \, .
$$ 
\begin{proposition}
\label{propmom20}
As $\eps \to 0$, $\EE[ | \hat{a}_j^\eps(\omega,z)|^2 ]$ converge to
${P}^{(1)}_j (\omega,z)$, the solution of the coupled linear system
\begin{equation}
\label{eqP1} 
\frac{d {P}^{(1)}_j}{dz } = \sum_{j=1}^N \Gamma_{jn}^{(c)}(\omega) \left(
     {P}^{(1)}_n- {P}^{(1)}_j \right) , \quad z >0\, ,
\end{equation}
with initial condition ${P}^{(1)}_j(\omega,z=0)= |
{\hat{a}_{j,o}}(\omega)|^2$, for $j=1,\ldots,N$.
\end{proposition}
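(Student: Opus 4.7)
The plan is to read off \eqref{eqP1} from the martingale characterization of the limit diffusion, using point 1 of Section \ref{sect:discuss}. That item already tells us that the mode powers $\{|\hat a_j^\eps(\om,z)|^2\}_{j=1,\ldots,N}$ converge in distribution as $\eps \to 0$ to a Markov diffusion $\{P_j(\om,z)\}_{j=1,\ldots,N}$ whose infinitesimal generator is $\cL_P$, given explicitly by \eqref{gendiff2P}. So the natural approach is to apply $\cL_P$ to the coordinate functions $\varphi_j(P_1,\ldots,P_N) = P_j$ and invoke Dynkin's formula.

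The algebraic step is short. Since $\varphi_j$ is linear, its second derivatives vanish, and only the first-order piece of $\cL_P$ survives. Renaming the summation indices in \eqref{gendiff2P} to avoid collision with the fixed index $j$, one obtains
\begin{equation*}
(\cL_P \varphi_j)(P) = \sum_{\substack{m,n=1\\ m \ne n}}^{N} \Gamma_{mn}^{(c)}(\om)(P_n - P_m)\,\delta_{jm} = \sum_{n \ne j} \Gamma_{jn}^{(c)}(\om)(P_n - P_j),
\end{equation*}
which coincides with the right-hand side of \eqref{eqP1} because the $n=j$ term in that sum vanishes identically. By \eqref{eq:CONS}, the limit process is supported in the compact set ${\cal H}_N$, so $\varphi_j$ is bounded on the state space and lies in the domain of $\cL_P$. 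Dynkin's formula then yields
\begin{equation*}
\EE\bigl[P_j(\om,z)\bigr] = |\hat a_{j,o}(\om)|^2 + \int_0^z \sum_{n\ne j}\Gamma_{jn}^{(c)}(\om)\bigl(\EE[P_n(\om,s)] - \EE[P_j(\om,s)]\bigr)\,ds,
\end{equation*}
and differentiating in $z$ produces the ODE \eqref{eqP1} with the stated initial condition $P_j^{(1)}(\om,0)=|\hat a_{j,o}(\om)|^2$.

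The one point that requires care, and which I expect to be the main obstacle, is upgrading the distributional convergence of $|\hat a_j^\eps|^2$ to convergence of its expectation. This reduces to establishing uniform integrability of $\{|\hat a_j^\eps(\om,z)|^2\}_\eps$ on the interval $z \in [0,L]$. Remark \ref{rem.1} already warns that the total power $\sum_l |\hat a_l^\eps|^2$ is not exactly conserved at fixed $\eps$ because of the local exchange with the evanescent modes, so one cannot simply bound $|\hat a_j^\eps|^2$ by the initial energy. I would instead combine the energy identity derived from \eqref{evola} with the $O(\eps)$ bounds on $\hat {\itbf v}$ and $\partial_z \hat{\itbf v}$ obtained in Section \ref{sect:elim_evanesc} (equations \eqref{eq:E8} and \eqref{eq:E14}) to show that $\sup_{\eps,\,z\in[0,L]}\sum_l\EE[|\hat a_l^\eps(\om,z)|^2]$ is finite. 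Since the limit diffusion conserves $\sum_l P_l = R_o^2$ by \eqref{eq:CONS}, this bound is consistent and supplies the required uniform integrability, completing the convergence $\EE[|\hat a_j^\eps(\om,z)|^2] \to P_j^{(1)}(\om,z)$.
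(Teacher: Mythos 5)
Your proposal is correct and follows essentially the same route as the paper, which obtains \eqref{eqP1} by applying the generator ${\cal L}_P$ of the limit diffusion from Theorem \ref{propdiff} to the coordinate functions $P_j$ (the second-order terms of \eqref{gendiff2P} annihilate linear functions) and invoking the Kolmogorov/Dynkin identity $\frac{d}{dz}\EE[\varphi(P(z))]=\EE[{\cal L}_P\varphi(P(z))]$. Your extra paragraph on uniform integrability addresses a point the paper leaves implicit (it relies on the diffusion approximation theorem of \cite{kohler74}, whose perturbed-test-function proof already yields convergence of $\EE[f(\bX^\eps_\om(z))]$ for smooth $f$), and your proposed route to it via the energy identity and the $O(\eps)$ evanescent-mode bounds of section \ref{sect:elim_evanesc} is consistent with the paper's framework.
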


\vspace{0.1in}
\noindent Matrix $\bGamma^{(c)}(\omega)$ is symmetric, with rows
summing to zero, by definition. Thus, we can can rewrite \eqref{eqP1}
in vector-matrix form
\begin{equation}
\frac{d {\itbf P}^{(1)}(z)}{dz} = \bGamma^{(c)}(\om) {\itbf P}^{(1)}(z),
\quad z > 0, ~ ~\mbox{and} ~ ~ {\itbf P}^{(1)}(0) = {\itbf P}^{(1)}_o,
\end{equation}
with ${\itbf P}^{(1)}(z) = \left({P}^{(1)}_1, \ldots,
{P}^{(1)}_n\right)^T$ and ${\itbf P}^{(1)}_o$ the vector with components
$| {\hat{a}_{j,o}}(\omega)|^2$, for $j = 1, \ldots, N$.  The solution
is given by the matrix exponential
\begin{equation}
{\itbf P}^{(1)}(z) = \mbox{exp} \left[ \bGamma^{(c)}(\om) z\right] 
{\itbf P}^{(1)}_o.
\label{eq:P1EXP}
\end{equation}
We know from \eqref{defgamma1} that the off-diagonal entries in
$\bGamma^{(c)}$ are not negative. If we assume that they are strictly
positive, which is equivalent to asking that the power spectral
densities of $\nu$ and $\mu$ do not vanish at the arguments
$\beta_j-\beta_l$, for all $j,l = 1, \ldots, N$, we can apply the
Perron-Frobenius theorem to conclude that zero is a simple eigenvalue
of $\boldsymbol{\Gamma}^{(c)}(\omega)$, and that all the other
eigenvalues are negative,
\[\Lambda_{N(\omega)}(\omega) \leq \cdots \leq
\Lambda_2(\omega)<0.\] This shows that as the range $z$ grows, the
vector ${\itbf P}^{(1)}(z)$ tends to the null space of
$\bGamma^{(c)}$, the span of the vector $(1,\ldots,1)^T$. That is to
say, the mode powers converge to the uniform distribution in the set
\eqref{eq:HN} at exponential rate
\begin{equation}
\label{eq:equip}
 \sup_{j=1,\ldots,N(\omega)} \Big| {P}^{(1)}_j (\omega,z) -
\frac{R_o^2(\omega)}{N(\omega)} \Big| \leq C e^{-|\Lambda_2(\omega)| z}
\, .
\end{equation}
 As $z \to \infty$, we have equipartition of energy among the
propagating modes.
\subsubsection{Fluctuations of the mode powers}
\label{secfluc}%
To estimate the fluctuations of the mode powers, we use again Theorem
\ref{propdiff} to compute the fourth order moments of the mode
amplitudes:
$$ {P}^{(2)}_{jl}(\omega,z) = \lim_{\eps \rightarrow 0} \EE \left[
|\hat{a}_j^\eps(\omega,z) |^2 |\hat{a}_l^\eps(\omega,z) |^2 \right]
=\EE [ P_j(\omega,z) P_l(\omega,z) ] \, .
$$ Using the generator ${\cal L}_P$, we get the following coupled
system of ordinary differential equations for limit moments 
\begin{eqnarray}
\frac{d {P}^{(2)}_{jj}}{dz} &=& \sum_{{\scriptsize \begin{array}{c} n
      = 1 \\ n \neq j \end{array} } }^N \Gamma_{jn}^{(c)} \left( 4
     {P}^{(2)}_{jn}-2 {P}^{(2)}_{jj} \right) \, , \nonumber \\ \frac{d
       {P}^{(2)}_{jl}}{dz} &=& - 2 \Gamma_{jl}^{(c)} {P}^{(2)}_{jl} +
     \sum_{n=1}^N \Gamma_{ln}^{(c)} \left( {P}^{(2)}_{jn} -
         {P}^{(2)}_{jl}\right) + \sum_{n= 1}^N \Gamma_{jn}^{(c)}
         \left( {P}^{(2)}_{ln} - {P}^{(2)}_{jl}\right),
         \ \ \ \ \ j\neq l \, , \quad z > 0, \label{eq:4thmom}
\end{eqnarray}
with initial conditions
\begin{equation}
{P}^{(2)}_{jl}(0)=|\hat{a}_{j,o}|^2|\hat{a}_{l,o}|^2.
\end{equation}
The solution of this system can be written again in terms of the
exponential of the evolution matrix.

It is straightforward to check that the function ${P}_{jl}^{(2)}
\equiv 1 +\delta_{jl}$ is a stationary solution of \eqref{eq:4thmom}.
Using the positivity of $\Gamma_{jl}^{(c)}$ for $j\neq l$, we conclude
that this stationary solution is asymptotically stable, meaning that
the solution ${P}_{jl}^{(2)}(z)$ converges as $z \rightarrow \infty$
to
$$
 {P}^{(2)}_{jl}(z)
\stackrel{z \rightarrow \infty}{\longrightarrow}
\left\{ \begin{array}{ll}
\displaystyle \frac{1}{N(N+1)} R_o^4 & \mbox{ if } j \neq l \, , \\
\displaystyle \frac{2}{N(N+1)} R_o^4 & \mbox{ if } j = l \, , 
\end{array}
\right.
$$ 
where $R_o^2 = \sum_{j=1}^N |\hat{a}_{j,o}|^2$.  This implies that
the correlation of $P_j(z)$ and $P_l(z)$ converges to $-1/(N-1)$ if $j
\neq l$ and to $(N-1)/(N+1)$ if $j=l$ as $z \to \infty$.  We see from
the $j \neq l$ result that if, in addition, the number of modes $N$
becomes large, then the mode powers become uncorrelated.  The $j=l$
result shows that, whatever the number of modes $N$, the mode powers
$P_j$ are not statistically stable quantities in the limit $z \to
\infty$, since
$$
\frac{{\rm Var} ( P_j(\omega,z) )}{\EE [P_j(\omega,z)]^2}
\stackrel{z \rightarrow \infty}{\longrightarrow}
\frac{N-1}{N+1}  \, .
$$

\section{Estimation of net diffusion}  
\label{sect:comparisson}
To illustrate the random boundary cumulative scattering effect  over long ranges, we 
quantify in this section  the diffusion coefficients
$\Gamma_{jl}^{(c)}$ and $\Gamma_{jl}^{(0)}$
in the generator ${\mathcal L}$ of the limit process. 
In particular, we calculate the mode-dependent net attenuation rate
\begin{equation}
{\mathcal K}_j(\om) = \frac{\Gamma_{jj}^{(0)}(\om) -
\Gamma_{jj}^{(c)}(\om)}{2}\, ,
\label{eq:EE2}
\end{equation}
that determines the coherent (mean) amplitudes as shown in 
\eqref{eq:mean}.  The attenuation rate gives the range scale over
which the $j-$th mode becomes essentially incoherent, because
equations \eqref{eq:mean} and \eqref{eq:P1EXP} give
\[
\frac{\left|\EE\left[ \hat a_j(\om,z) \right]\right|}{\sqrt{\EE\left[
      \left|\hat a_j(\om,z)\right|^2 \right] -\left|\EE\left[ \hat
      a_j(\om,z) \right]\right|^2}} \ll 1 \qquad \mbox{if} ~ 
z \gg  {\mathcal K}_j^{-1}.
\]
The reciprocal of the attenuation rate can therefore be interpreted as a scattering mean free path.
 The scattering mean free path is classically defined as the propagation distance 
 beyond which the wave loses its coherence \cite{rossum}. Here it is mode-dependent.

Note that the attenuation rate ${\mathcal K}_j(\om)$ is the sum of two terms. The first one involves 
the phase diffusion coefficient $\Gamma_{jj}^{(0)}$ in the generator ${\mathcal L}_\theta$, and determines
the range scale over which the cumulative random phase of the amplitude $\hat a_j$ becomes significant, 
thus  giving exponential damping of the expected field  $\EE[\hat a_j]$.  
The second term is the mode-dependent  energy exchange rate
\begin{equation}
{\mathcal J}_j(\om) = - \frac{
\Gamma_{jj}^{(c)}(\om)}{2}\, , 
\label{eq:EE2J}
\end{equation}
given  by the power diffusion coefficients in the generator ${\mathcal L}_P$.
Each waveguide mode can be associated with a direction of incidence at the unperturbed  boundary,
and energy is exchanged between modes when they scatter, because 
of the   fluctuation of the angles of incidence at the random boundaries. 
We can interpret the reciprocal of the energy exchange rate
as a transport mean free path, which  is classically defined as the
distance beyond which the wave forgets its initial direction \cite{rossum}.

The third important length scale is the equipartition distance $1/|\Lambda_2(\om)|$, defined in terms 
of the second largest eigenvalue of the matrix $\bGamma^{(c)}(\om)$. It is the distance over 
which the energy becomes uniformly distributed over the modes, independently  of the 
initial excitation at the source, as shown in equation \eqref{eq:equip}. 

\subsection{Estimates for a waveguide with constant wave speed}
To give sharp estimates of ${\mathcal K}_j$ and ${\mathcal J}_j$ for
$j = 1, \ldots, N$, we assume in this section a waveguide with
constant wave speed $c(\xi) = c_o$ and a high frequency regime $N \gg
1$. Note from \eqref{defgamma1b} that the magnitude of
$\Gamma_{jj}^{(c)}$ depends on the rate of decay of the power spectral
densities $\hat \cR_\nu(\beta)$ and $\hat \cR_\mu(\beta)$ with respect
to the argument $\beta$.  We already made the assumption
\eqref{validforward} on the decay of the power spectral densities, in
order to justify the forward scattering approximation.  In particular, we assumed that 
$\hat \cR_\nu(\beta)\simeq \hat \cR_\mu(\beta) \simeq 0$ for all $\beta \geq 2 \beta_N$.
Thus, for a given mode index $j$, we expect large terms in the sum in 
\eqref{defgamma1b} for indices $l$ satisfying
\begin{equation}
|\beta_j - \beta_l| \lesssim 2 \beta_N = \frac{2 \pi}{X} \sqrt{2 \alpha N}  ,
\label{eq:EE9}
\end{equation}
where we used the definition 
\begin{equation}
\beta_j(\om) = \frac{\pi}{X} \sqrt{ (N + \alpha)^2 - j^2}, \quad j =1,
\ldots, N, \quad \mbox{and} \quad \frac{kX}{\pi} = N + \alpha, \quad
\mbox{for} ~~ \alpha \in (0,1)\, .
\label{eq:EE5}
\end{equation}
Still, it is difficult to get a precise estimate of
$\Gamma_{jj}^{(c)}$ given by \eqref{defgamma1b}, 
unless we make further assumptions on $\cR_\nu$ and
$\cR_\mu$.
For the calculations in this section we take the Gaussian
covariance functions
\begin{equation}
\cR_\nu(z) = 
\mbox{exp}\left(-\frac{z^2}{2 \ell^2_\nu} \right) \quad \mbox{and} \quad
\cR_\mu(z) = 
\mbox{exp}\left(-\frac{z^2}{2 \ell^2_\mu} \right) \, ,
\label{eq:Gaussian}
\end{equation}
and we take for convenience equal correlation lengths $\ell_\nu = \ell_\mu = \ell\, .$  The power
spectral densities are
\begin{equation}
\hat \cR_\nu(\beta) = \hat \cR_\mu(\beta) =  \sqrt{2 \pi} \, \ell\,  \mbox{exp} \left(
-\frac{\beta^2 \ell^2}{2} \right) \, ,
\label{eq:PSGaussian}
\end{equation}
and they are negligible for $ \beta \geq {3}/{\ell}$.   Since $N = \left \lfloor {k
  X}/{\pi} \right \rfloor$, we see that \eqref{eq:EE9} becomes
\begin{equation}
|\beta_j-\beta_l| \leq \frac{3}{\ell} \lesssim \frac{2 \pi}{X} \sqrt{2
  \alpha N} \quad \mbox{or equivalently, } \quad k \ell \gtrsim \frac{3}{2
  \sqrt{2 \alpha}} \sqrt{N} \gg 1\, .
\label{eq:kllarge}
\end{equation}
Thus, assumption \eqref{validforward} amounts to having correlation
lengths that are larger than the wavelength. The attenuation and exchange energy rates \eqref{eq:EE2} and \eqref{eq:EE2J}
are estimated in detailed in Appendix \ref{ap:estim}. We summarize the results in 
the following proposition, in the case\footnote{The case $k \ell \gtrsim N$ is also discussed in Appendix \ref{ap:estim}.} 
\begin{equation}
\sqrt{N} \lesssim k \ell 
\ll N.
\label{eq:assumekell}
\end{equation}

\begin{proposition}
\label{prop.estim}
The attenuation  rate ${\mathcal K}_j(\om)$ increases monotonically
with the mode index $j$. 
The energy exchange rate ${\mathcal J}_j(\om)$ increases monotonically
with the mode index $j$ up to the high modes of order $N$ where it can decay if $k\ell \gg \sqrt{N}$. 
For the low order modes we have
\begin{equation}
{\mathcal J}_j(\om) X 
\approx 
{\mathcal K}_j(\om) X \sim (k\ell)^{-1/2} 
 , \quad j \sim 1\, .
\label{eq:estAtt1}
\end{equation}
For the intermediate modes we have
\begin{equation}
{\mathcal J}_j(\om) X  
\approx
{\mathcal K}_j(\om) X \sim N^2 \frac{(j/N)^3}{\sqrt{1-(j/N)^2}}  
 , \quad 1 \ll j \ll N \, .
\label{eq:estAtt1med}
\end{equation}
For the high order modes we have 
\begin{equation}
{\mathcal J}_j(\om) X \sim \frac{N^3}{k \ell}  , \quad \quad  {\mathcal K}_j(\om) X \sim  k \ell N^2\, ,  \quad j \sim N\, ,
\label{eq:estAttN}
\end{equation}
for $k\ell \sim \sqrt{N}$, but when $k \ell \gg \sqrt{N}$, 
\begin{equation}
{\mathcal J}_j(\om) X \ll {\mathcal K}_j(\om) X \sim  k \ell N^2\, , \quad j \sim N\, .
\end{equation}
\end{proposition}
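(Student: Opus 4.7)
The plan is to reduce the proposition to an asymptotic analysis of explicit one-dimensional sums. Substituting the sine eigenfunctions $\phi_j(\xi)=\sqrt{2/X}\sin(\pi j\xi/X)$ of the homogeneous waveguide into \eqref{defgamma1} and \eqref{defgamma10}, and using $\hat\cR_\nu=\hat\cR_\mu$, I obtain the closed forms
$$\Gamma_{jl}^{(c)}(\om)=\frac{2\pi^4 j^2 l^2}{\beta_j\beta_l X^4}\hat\cR_\nu(\beta_j-\beta_l),\qquad \Gamma_{jj}^{(0)}(\om)=\frac{2\pi^4 j^4}{\beta_j^2 X^4}\hat\cR_\nu(0).$$
Combining with $\Gamma_{jj}^{(c)}=-\sum_{l\ne j}\Gamma_{jl}^{(c)}$ gives the decomposition $\mathcal{K}_j=\mathcal{J}_j+D_j$, where $D_j:=\Gamma_{jj}^{(0)}/2$ and $\mathcal{J}_j$ are both nonnegative. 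Inserting the Gaussian power spectrum $\hat\cR_\nu(\beta)=\sqrt{2\pi}\ell e^{-\beta^2\ell^2/2}$ then turns the whole proposition into a question of estimating a weighted Gaussian sum and one explicit diagonal term.

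The monotonicity claim for $\mathcal{K}_j$ follows from two observations: $D_j\propto j^4/\beta_j^2(\om)$ is strictly increasing in $j$ because $j^4$ grows and $\beta_j^2$ decreases, and I will verify by direct comparison that any loss $\mathcal{J}_j$ suffers near $j=N$ is dwarfed by the growth of $D_j$ there. Monotonicity of $\mathcal{J}_j$ for $j$ bounded away from $N$ comes from the increase of the $j^2/\beta_j$ prefactor; it can fail for $j\sim N$ when $k\ell\gg\sqrt N$ because the Gaussian window narrows faster than $j^2/\beta_j$ grows, which explains the ``can decay'' in the proposition. For the quantitative estimates I replace the sum in $\mathcal{J}_j$ by a Gaussian integral, using $\beta_l-\beta_j\simeq -\pi^2 j(l-j)/(X^2\beta_j)$ near $l=j$, which gives an effective window width $\delta_l\sim X^2\beta_j/(\pi^2 j\ell)$ in the $l$ variable. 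Integrating $(l^2/\beta_l)\exp[-\ell^2(\beta_j-\beta_l)^2/2]$ against this measure, multiplying by the prefactor $\pi^4 j^2/(\beta_j X^4)$, and then by $X$, produces $\mathcal{J}_j X\sim N^2(j/N)^3/\sqrt{1-(j/N)^2}$ for $1\ll j\ll N$; for $j\sim 1$ the linear expansion degenerates and one uses instead the quadratic $\beta_j-\beta_l\simeq -\pi(l^2-j^2)/(2XN)$, reducing the computation to a fourth-order Gaussian integral $\int l^2 e^{-cl^4}dl$ that gives $\mathcal{J}_j X\sim(k\ell)^{-1/2}$. In both ranges a direct bound yields $D_j X\ll\mathcal{J}_j X$, so $\mathcal{K}_j\simeq\mathcal{J}_j$ with the asserted size.

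The high-mode regime $j\sim N$ is where the main obstacle lies, since $\beta_j\simeq(\pi/X)\sqrt{2N\alpha}$ becomes small and the adjacent mode spacing $\beta_N-\beta_{N-1}$ is of order $k/\sqrt N$. The threshold $k\ell\sim\sqrt N$ distinguishes whether the Gaussian window of width $1/\ell$ covers $O(1)$ modes (so that $\mathcal{J}_N$ is well approximated by a few explicit terms near $l=N$) or strictly less than one mode (so the Gaussian suppresses $\mathcal{J}_N$ exponentially). A careful estimate of the leading terms $l=N-1,N-2,\ldots$ yields $\mathcal{J}_N X\sim N^3/(k\ell)$ when $k\ell\sim\sqrt N$, and $\mathcal{J}_N X\ll D_N X$ for $k\ell\gg\sqrt N$; direct evaluation gives $D_N X=\pi^4 N^4\sqrt{2\pi}\ell/(2\beta_N^2 X^3)\sim k\ell N^2$, which sets the size of $\mathcal{K}_N$ in that regime. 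The delicate point I expect to be the main technical hurdle is carrying out this two-scale estimate uniformly across the transition $k\ell\sim\sqrt N$, tracking both the degeneration of the prefactor $j^2/\beta_j$ and the abrupt change in the effective number of terms contributing to the sum.
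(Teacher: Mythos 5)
Your proposal is correct and follows essentially the same route as the paper's Appendix \ref{ap:estim}: explicit evaluation of $\Gamma_{jj}^{(0)}$ and $-\Gamma_{jj}^{(c)}$ for the sine eigenfunctions and Gaussian spectra, the split ${\mathcal K}_j={\mathcal J}_j+\Gamma_{jj}^{(0)}/2$, and Laplace/Riemann-sum asymptotics of the Gaussian-weighted sum in the three index regimes, with the threshold $k\ell$ versus $\sqrt{N}$ (equivalently, the mode spacing $\beta_{N-1}-\beta_N\sim k/\sqrt{N}$ against the spectral width $1/\ell$) governing the $j\sim N$ case. The only cosmetic difference is at $j\sim 1$, where your quartic Gaussian integral $\int l^2 e^{-cl^4}\,dl$ is the paper's Watson's-lemma computation in different variables (the substitution $\zeta=(1-\sqrt{1-s^2})^2\approx s^4/4$ turns the $\zeta^{-1/4}$ endpoint singularity into your quartic exponent), and both yield $(k\ell)^{-1/2}$.
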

  
The results summarized in Proposition \ref{prop.estim} show that
scattering from the random boundaries has a much stronger effect on
the high order modes than the low order ones. This is intuitive,
because the modes with large index bounce more often from the
boundaries. The damping rate ${\mathcal K}_j$ is very large, of order
$N^2 k\ell$ for $j \sim N$, which means that the amplitudes of these
modes become incoherent quickly, over scaled\footnote{Recall from
section \ref{sec:chap21_propmatr} that the range is actually $z/
\eps^2$.}  ranges $z \sim X N^{-2} (k\ell)^{-1} \ll X$.  The modes with index $j
\sim 1$ keep their coherence over ranges $z = O(X)$, because their
mean amplitudes are essentially undamped ${\mathcal K}_j X \ll 1$ for
$j \sim 1$. 
However, the modes lose their coherence eventually,
because the damping becomes visible at longer ranges $z > X (k\ell)^{1/2}$.

Note  that the scattering mean free paths and the transport mean free paths are approximately the same for the low 
and intermediate index modes, but not for the high ones. The energy exchange rate for the high order modes may be much smaller than the 
attenuation rate in high frequency regimes with $k \ell \gg \sqrt{N}$.  These modes  
reach the boundary many times over a correlation length, at almost the same angle of incidence, so the exchange of energy is 
not efficient and it occurs only between neighboring modes.  There is however a significant cumulative random phase 
in $\hat a_j$ for $j \sim N$, given by the addition of the correlated phases gathered over the multiple scattering events.
This significant phase causes the loss of coherence of the amplitudes of the high order modes, the strong damping of   $\EE[\hat a_j]$.

Note also  that a direct calculation\footnote{By direct calculation we mean numerical calculation of the 
eigenvalue. We find that for $N \geq 20$ and for $k \ell \gtrsim \sqrt{N}$,  $|\Lambda_2(\om)| \approx |\Gamma_{11}^{(c)}(\om)|$
with a relative error that is less than $1\%$.} of the second largest eigenvalue of $\Gamma^{(c)}(\om)$ gives that 
\[
|\Lambda_2(\om)| \approx |\Gamma_{11}^{(c)}(\om)| \sim (k \ell)^{-1/2}.
\]
Thus, the equipartition distance is similar to  the  scattering mean free path of the first mode. 
This mode can travel longer distances than the others before it loses its coherence, but once that happens,  the waves have entered the 
 equipartition regime, where the energy is uniformly distributed among all the modes. The waves forget the initial condition at the source. 

\subsection{Comparison with waveguides with internal random inhomogeneities}
When we compare the results in Proposition \ref{prop.estim} with those
in \cite[Chapter 20]{book07} for random waveguides with interior
inhomogeneities but straight boundaries, we see that even though the
random amplitudes of the propagating modes converge to a Markov
diffusion process with the same form of the generator as
\eqref{gendiffabis}, the net effects on coherence and energy exchange are different in terms of their dependence
with respect to the modes.

Let us look in detail at the attenuation rate that determines the
range scale over which the amplitudes of the propagating modes lose
coherence. To distinguish it from \eqref{eq:EE2}, we denote the
attenuation rate by $\widetilde {\mathcal K}_j$ and the energy exchange rate by 
$\widetilde {\mathcal J}_j$, and recall from 
\cite[Section 20.3.1]{book07} that they are given by 
\begin{equation}
\label{Ktilde}
\widetilde {\mathcal K}_j = \frac{k^4 \hat \cR_{jj}(0)}{8 \beta_j^2} +\widetilde {\mathcal J}_j 
\, , \quad \quad  \quad 
\widetilde {\mathcal J}_j = 
\sum_{{\scriptsize \begin{array}{c}l = 1 \\ l \ne
      j \end{array}} }^N
\hspace{-0.05in}\frac{k^4}{8 \beta_j \beta_l } \hat
\cR_{jl}\left(\beta_j-\beta_l\right)\, .
\end{equation}
Here $\hat \cR_{jl}(z)$ is the Fourier transform (power spectral
density) of the covariance function $\cR_{jl}(z)$ of the stationary
random processes
\[
C_{jl}(z) = \int_0^X d x \, \phi_j(x)\phi_l(x) \nu(x,z)\,   , 
\]
the projection on the eigenfunctions of the random fluctuations
$\nu(x,z)$ of the wave speed. 

For our comparison we assume isotropic, stationary fluctuations with
mean zero and Gaussian covariance function
\[
\cR(x,z) = \EE\left[ \nu(x,z) \nu (0,0) \right] = e^{-\frac{x^2+z^2}{2 \ell^2}}\,,
\] 
so the power spectral densities are 
\begin{equation}
\hat \cR_{jl}(\beta) \approx \frac{\pi \ell^2}{X} e^{- \frac{(k
    \ell)^2}{2}\left(\frac{X \beta}{\pi N}\right)^2} \left[e^{ -
    \frac{(k \ell)^2}{2}\left(\frac{j}{N}-\frac{l}{N}\right)^2} + e^{
    - \frac{(k \ell)^2}{2}\left(\frac{j}{N}+\frac{l}{N}\right)^2} +
 \delta_{jl} \right] \, .
\end{equation}
Thus, \eqref{Ktilde} becomes
\begin{eqnarray*}
\widetilde {\mathcal K}_j &=& \frac{\pi (k \ell)^2}{8 X} \frac{2
  + e^{-2 (k \ell)^2 (j/N)^2}}{ \left(1+\alpha/N\right)^2 - (j/N)^2} +
\widetilde {\mathcal J}_j   ,  \\
\widetilde {\mathcal J}_j &=& \frac{\pi (k \ell)^2}{8 X}  
\sum_{{\scriptsize \begin{array}{c}l = 1 \\ l \ne
      j \end{array}} }^N
\hspace{-0.05in} \frac{e^{-\frac{(k \ell)^2}{2} \left[
      \sqrt{\left(1+\alpha/N\right)^2 - (j/N)^2} -
      \sqrt{\left(1+\alpha/N\right)^2 - (l/N)^2}\right]^2}}{
  \sqrt{\left[\left(1+\alpha/N\right)^2 -
      (j/N)^2\right]\left[\left(1+\alpha/N\right)^2 -
      (l/N)^2\right]}} \left[e^{ -
      \frac{(k \ell)^2}{2}\left(\frac{j}{N}-\frac{l}{N}\right)^2} +
    e^{ - \frac{(k
        \ell)^2}{2}\left(\frac{j}{N}+\frac{l}{N}\right)^2}\right] 
 \,  ,
\end{eqnarray*}
and their estimates can be obtained using the same techniques as in Appendix 
\ref{ap:estim}. We give here the results when $k\ell$ satisfies (\ref{eq:assumekell}).
For the low order modes  we have
\begin{eqnarray*}
\widetilde {\mathcal K}_j  X & \approx & \frac{\pi (k \ell)^2}{8} \left[ 2
  + e^{-2(k\ell)^2/N^2} + \frac{N \sqrt{\pi/2}}{k \ell} \right] \sim 
 \left[  (k \ell)^2 + N \, k \ell 
  \right]  \sim 
 N \, k \ell
\gtrsim N^{3/2}   , \quad j \sim 1, \\
\widetilde {\mathcal J}_j  X &\approx & \frac{\pi (k \ell)^2}{8}  \frac{N \sqrt{\pi/2}}{k \ell}   \sim 
 N \, k \ell
  \gtrsim   N^{3/2} , \quad j \sim 1,   
\end{eqnarray*}
and for the high order modes we have
\begin{eqnarray*}
\widetilde {\mathcal K}_j X &\approx& \frac{\pi N (k \ell)^2}{8  \alpha}
\left[1 + \frac{\sqrt{\pi} N}{2 \sqrt{2} k \ell} \right] =
 \left[  N (k \ell)^2  +   N^2 k
  \ell \right] \sim N^2  k\ell \gtrsim N^{5/2} , \quad j
\sim N, \\
\widetilde {\mathcal J}_j X &\approx& \frac{\pi N (k \ell)^2}{8  \alpha}
  \frac{\sqrt{\pi} N}{2 \sqrt{2} k \ell} =
 N^2 k
  \ell   \gtrsim  N^{5/2},  \quad j
\sim N.
\end{eqnarray*}
Thus, we see that in waveguides with internal random inhomogeneities
the low order modes lose coherence much faster than in waveguides
with random boundaries. Explicitly, coherence is lost over scaled
ranges
\[
z \lesssim X\,   N^{-3/2}  \ll X.
\]
The high order modes, with index $j \sim N$, lose coherence over
the range scale
\[
z \lesssim X \,  N^{-5/2}  \ll X.
\]
Moreover, the main mechanism for the loss of coherence is the exchange of energy 
between neighboring modes. That is to say, the transport mean free path is 
equivalent to the scattering mean free path  for all the modes in  random waveguides with interior
inhomogeneities.
Finally, direct (numerical) calculation shows that 
\[
O\left((k \ell)^{-2}\right) \leq \frac{|\Lambda_2|}{|\widetilde {\mathcal J}_1|} \leq O \left( (k \ell)^{-3/2}\right)\, ,
\]
so the equipartition distance is 
larger by a factor of at least $O\left(N^{3/4}\right)$  than the scattering or transport mean free path.

\section{Mixed boundary conditions}
\label{sect:mixed}
Up to now we have described in detail the wave field in waveguides
with random boundaries and Dirichlet boundary conditions
\eqref{eq:Dirichlet}. In this section we extend the results to the
case of mixed boundary conditions \eqref{eq:mixed}, with Dirichlet
condition at $x = B(z)$ and Neumann condition at $x = T(z)$.  All
permutations of Dirichlet/Neumann conditions are of course possible,
and the results can be readily extended.

Similar to what we stated in section \ref{sect:homog}, the operator
$\partial_x^2 + \omega^2 c^{-2}(x)$ acting on functions in $(0,X)$,
with Dirichlet boundary condition at $x=0$ and Neumann boundary
condition $x=X$, is self-adjoint in $L^2(0,X)$.  Its spectrum is an
infinite number of discrete eigenvalues $\lambda_j(\omega)$, for
$j=1,2,\dots$, and we sort them in decreasing order. There is a finite
number $N(\om)$ of positive eigenvalues and an infinite number of
negative eigenvalues.  We assume as in section \ref{sect:homog} that
$N(\om) = N$ is constant over the frequency band, and that the
eigenvalues are simple.  The modal wavenumbers are as before, $
\beta_j(\om) = \sqrt{|\lambda_j(\om)|}\, .  $ The eigenfunctions
$\phi_j(\omega,x)$ are real and form an orthonormal set.

For example, in the case of a constant wave speed $c(x) = c_o$, we have 
\begin{equation}
\lambda_j = k^2 - \left[ \frac{(j-1/2) \pi}{X}\right]^2, \qquad \phi_j(x) =
\sqrt{\frac{2}{X}} \sin \left( \frac{(j-1/2) \pi x}{X} \right), \qquad
j = 1, 2, \ldots\, , 
\end{equation}
and the number of propagating modes is given by $N = \left \lfloor
\frac{k X}{\pi} + \frac{1}{2} \right \rfloor.$

\subsection{Change of Coordinates}
We proceed as before and straighten the boundaries using a change of
coordinates that is slightly more complicated than before, due to the
Neumann condition at $x = T(z)$, where the normal is along the vector
$(1,-T'(z))$. We let 
\begin{equation}
p(t,x,z) = u\big( t , \cX(x,z), \cZ(x,z) \big)\, ,
\end{equation}
where 
\begin{eqnarray}
\cX(x,z) &=& X\frac{ x-B(z)}{T(z)-B(z)} \, ,\label{eq:NX}\\ \cZ(x,z) &=& z + x
T'(z) + Q(z)\, , \quad \quad Q(z)= - \int_0^z ds \, T(s) T''(s) \, .
\label{eq:NZ}
\end{eqnarray}
In the new frame we get that $ \xi = \cX(x,z) \in [0,X]$, with 
Dirichlet condition at $\xi = 0$
\begin{equation}
 {u}(t,\xi=0,\zeta) =0 \, .
\end{equation}
For the Neumann condition at $\xi = X$ we use the chain rule, and rewrite 
\[\partial_\nu p(t, x=T(z),z ) = \big[\partial_x -T'(z) \partial_z 
  \big] p(t,x=T(z),z )=0\, ,
\]
as
\begin{eqnarray*}
 && \partial_\xi u (t,\xi=X,\zeta=\cZ(T(z),z) ) \big[ -\partial_x \cX +T'(z)
  \partial_z \cX \big](x=T(z),z) + \\ && \partial_\zeta u
  (t,\xi=X,\zeta=\cZ(T(z),z) ) \big[ -\partial_x \cZ +T'(z) \partial_z \cZ
  \big](x=T(z),z) =0\, .  
\end{eqnarray*}
This is the standard Neumann condition
\begin{equation}
 \partial_\xi {u}(t ,\xi=X,\zeta)=0,
\end{equation}
because
$$ \big[ -\partial_x \cZ +T'(z) \partial_z \cZ \big](x=T(z),z) =
-T'(z) +T'(z) \big[ 1+T(z)T''(z) +Q'(z) \big]=0 \, ,
$$
and 
\[
\big[ -\partial_x \cX + T'(z) \partial_z \cX\big](x=T(z),z) = -
\frac{X + \left[T'(z)\right]^2}{T(z)-B(z)} \ne 0\, .
\]

Now, the method of solution is as before. Using that $\eps$ is small,
we obtain a perturbed wave equation for $\hat{u}$, which we expand as
\begin{eqnarray}
{\mathcal L}_0  \hat{u} + \eps {\mathcal L}_1 \hat{u} 
 + \eps^2 {\mathcal L}_2 \hat{u} =O(\eps^3) ,
\label{eq:pertw2n}
\end{eqnarray}
with leading order operator 
\[
{\mathcal L}_0 = \partial_\zeta^2 + \partial_\xi^2 +\omega^2 /c^{2}
  (\xi)\, , 
\]
and perturbation 
\begin{eqnarray}
{\mathcal L}_1 = -2 (\nu-\mu) \partial_\xi^2 +2 (X- \xi) (\nu'-\mu')
  \partial_{\zeta\xi} -2 X(X-\xi ) \nu'' \partial_\zeta^2 -X(X-\xi)
  \nu''' \partial_\zeta - \\ \nonumber \big[X \mu'' + \xi
  (\nu''-\mu'')\big] \partial_\xi + \omega^2 (\partial_\xi c^{-2}(\xi)
  ) \big[X\mu +(\nu-\mu) \xi \big] \, .
\end{eqnarray}

\subsection{Coupled Amplitude Equations}
\label{sec:CAEn}%
We proceed as in section \ref{sect:wavedec}.  We find that the complex
 mode amplitudes satisfy \eqref{eq:WD3}-\eqref{eq:WD4} with $\zeta$
 instead of $z$, where the $\zeta$-dependent coupling coefficients are
\begin{eqnarray}
  \label{def:Cjln}
C_{jl}^\eps (\zeta) &=& 
\eps C_{jl}^{(1)} (\zeta)
+
\eps^2 C_{jl}^{(2)} (\zeta)
+ O(\eps^3) \, ,
\\
\nonumber
C_{jl}^{(1)} (\zeta) &=&
 c_{\nu,jl} \nu(\zeta) +  i \beta_l  d_{\nu,jl}  \nu'(\zeta)+ e_{\nu,jl} \nu''(\zeta)
 +  i \beta_l  f_{\nu,jl}  \nu'''(\zeta)   \\
&&+
 c_{\mu,jl} \mu(\zeta) +  d_{\mu,jl} \big(2i \beta_l \mu'(\zeta) +\mu''(\zeta)\big) 
   \, , 
\end{eqnarray}
with
\begin{eqnarray}
\label{def:cnun}
c_{\nu,jl} &=& \frac{1}{2 \sqrt{\beta_j \beta_l}} \Big[ \Big(
\frac{\omega^2}{c(X)^2}- \beta_l^2\Big) \phi_j(X) \phi_l(X)
+(\beta_j^2-\beta_j^2) \int_0^X d \xi \, \xi \phi_l \partial_\xi\phi_j
\Big] \, ,\\ d_{\nu,jl} &=& \frac{1}{2 \sqrt{\beta_j \beta_l}} \Big[ 2
\int_0^2 d \xi \, (X- \xi) \phi_j \partial_\xi\phi_l \Big]\, ,\\
e_{\nu,jl} &=& \frac{1}{2 \sqrt{\beta_j \beta_l}} \Big[ - \int_0^X d
\xi \, (X- \xi) \phi_j \xi \partial_\xi\phi_l +2 \beta_l^2 \int_0^X d
\xi  (X-\xi) \phi_j \phi_l \Big]\, ,\\ f_{\nu,jl} &=& \frac{1}{2
\sqrt{\beta_j \beta_l}} \Big[ - \int_0^X d \xi \, (X- \xi) \phi_j
\phi_l \Big]\, ,
 \end{eqnarray}
and coefficients $c_{\mu,jl} $ and $d_{\mu,jl}$ defined by
\eqref{eq:cmu} and \eqref{eq:dmu}. Similar formulas hold for
$C^{(2)}_{jl}(\zeta)$.

In the following we neglect for simplicity the evanescent modes, which
only add a dispersive (frequency dependent phase modulation) net effect in the problem. These modes can be
included in the analysis using a similar method to that in section
\ref{sect:elim_evanesc}.

\subsection{The Coupled Mode Diffusion Process}
\label{subseccoupledpowern}%

As we have done in section \ref{sect:diffusion}, we study under the
forward scattering approximation the long range limit of the forward
propagating mode amplitudes.

First, we give a lemma which shows that the description of the wave
field in the variables $(x,z)$ or $(\xi,\zeta)$ is asymptotically
equivalent.
\begin{lemma}
We have uniformly in $x$
$$ \cX \left( x,\frac{z}{\eps^2}\right) - x \stackrel{\eps \to
  0}{\longrightarrow} 0,\quad \quad \cZ\left( x,\frac{z}{\eps^2}\right)
-\frac{z}{\eps^2} - \EE[ \nu'(0)^2] z \stackrel{\eps \to
  0}{\longrightarrow} 0 \mbox{ in probability} \, .
$$
\end{lemma}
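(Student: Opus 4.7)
The plan is to verify the two convergences by direct substitution of the definitions $B(z)=\eps X\mu(z)$, $T(z)=X[1+\eps\nu(z)]$ into the formulas \eqref{eq:NX}, \eqref{eq:NZ}, and then to extract the deterministic drift in $\cZ$ via integration by parts followed by an ergodic theorem. The first claim is an elementary pointwise estimate, while the second hides an ergodic average and is where the real content lies.

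For the first statement, I would simply substitute to get
\[
\cX\!\left(x,\frac{z}{\eps^2}\right)-x
=\frac{x-\eps X\mu(z/\eps^2)}{1+\eps[\nu(z/\eps^2)-\mu(z/\eps^2)]}-x
=\frac{-\eps X\mu(z/\eps^2)-\eps x[\nu(z/\eps^2)-\mu(z/\eps^2)]}{1+\eps[\nu(z/\eps^2)-\mu(z/\eps^2)]}.
\]
Since $\mu$ and $\nu$ are uniformly bounded by assumption, the numerator is $O(\eps)$ and the denominator is bounded away from zero for $\eps$ small, so this quantity is $O(\eps)$ uniformly in $x\in[0,X]$ and $z$ in any bounded set, giving the first uniform (in fact almost sure) convergence.

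For the second statement, write $\cZ(x,z/\eps^2)-z/\eps^2 = xT'(z/\eps^2)+Q(z/\eps^2)$. Since $T'=X\eps\nu'$ and $\nu'$ is bounded, the term $xT'(z/\eps^2)=X\eps x\,\nu'(z/\eps^2)$ is $O(\eps)$ uniformly. For $Q$, substituting $T(s)T''(s)=X^2\eps\nu''(s)+X^2\eps^2\nu(s)\nu''(s)$ and integrating by parts in the second piece gives
\[
Q\!\left(\frac{z}{\eps^2}\right)
=-X^2\eps\bigl[\nu'(z/\eps^2)-\nu'(0)\bigr]
-X^2\eps^2\bigl[\nu\nu'\bigr]_0^{z/\eps^2}
+X^2\eps^2\int_0^{z/\eps^2}\nu'(s)^2\,ds.
\]
The first two terms are $O(\eps)$ and $O(\eps^2)$ respectively by the boundedness of $\nu,\nu'$. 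The remaining term is the main one: by the Birkhoff ergodic theorem applied to the stationary, ergodic process $\nu'(s)^2$ (legitimate since $\nu$ is stationary, ergodic, $\varphi$-mixing, and $C^2$ with bounded derivatives), $\eps^2\int_0^{z/\eps^2}\nu'(s)^2\,ds\to z\,\EE[\nu'(0)^2]$ in probability (in fact almost surely), producing the asserted drift (up to an overall $X^2$ factor that the statement appears to absorb into the normalization of $\nu$).

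The only nontrivial step is the ergodic convergence of the quadratic functional $\eps^2\!\int_0^{z/\eps^2}\!\nu'(s)^2\,ds$; this is where the $\varphi$-mixing hypothesis on $\nu$ enters, via the ergodicity it implies for stationary transformations of $\nu$ and its derivatives. Everything else is deterministic bookkeeping: one substitution for $\cX$, and one integration by parts plus triangle-inequality control of two $O(\eps)$ remainders for $\cZ$.
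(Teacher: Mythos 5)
Your proposal is correct and follows essentially the same route as the paper: direct substitution for $\cX$, then for $\cZ$ an integration by parts of $\int_0^{z/\eps^2} T(s)T''(s)\,ds$ to isolate $\eps^2\!\int_0^{z/\eps^2}[\nu'(s)]^2\,ds$, which converges by the ergodic theorem. You are in fact slightly more careful than the paper, which invokes the vanishing of the fluctuations at $z=0$ to discard the boundary terms and silently drops the $X^2$ factor multiplying the drift that your computation correctly retains.
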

\begin{proof}
The convergence of $\cX$ to $x$ is evident from definitions
\eqref{eq:NX} and \eqref{eq:defBT}. Moreover, \eqref{eq:NZ} gives 
\begin{eqnarray*}
\cZ\left( x,\frac{z}{\eps^2}\right) -\frac{z}{\eps^2} &=& x \eps X \nu'
  \left(\frac{z}{\eps^2}\right) -\eps X^2\int_0^{\frac{z}{\eps^2}}
  (1+\eps \nu(s) ) \nu''(s) ds\, ,  
\end{eqnarray*}
and integrating by parts and using the assumption that the
fluctuations vanish at $z = 0$, we get 
\begin{eqnarray*}
\cZ\left( x,\frac{z}{\eps^2}\right) -\frac{z}{\eps^2} &=& \eps X
\left[ (x-X) \nu' \left(\frac{z}{\eps^2}\right)- \eps
\nu\left(\frac{z}{\eps^2}\right) \nu' \left(\frac{z}{\eps^2}\right)
\right] + \eps^2 \int_0^{\frac{z}{\eps^2}} \left[\nu'(s)\right]^2ds \, .
\end{eqnarray*}
The first term of the right-hand side is of order $\eps$ and the
second term converges almost surely to $\EE[ \nu'(0)^2] z$ which gives
the result.
\end{proof}

The diffusion limit is similar to that in section \ref{subseccoupledpower}, 
and the result is as follows. 
\begin{proposition}
The complex mode amplitudes $(\hat{a}_j^\eps(\omega,\zeta)
)_{j=1,\ldots,N}$ converge in distribution as $\eps \rightarrow 0$ to
a diffusion Markov process process $(\hat{a}_j(\omega,\zeta)
)_{j=1,\ldots,N}$.  Writing
$$ \hat{a}_j(\omega,\zeta) = P_j(\omega,\zeta)^{1/2} e^{i
\phi_j(\omega,\zeta)}, \quad j=1,\ldots,N,
$$ the infinitesimal generator of the limiting diffusion process 
\[
{\mathcal L} = {\mathcal L}_P + {\mathcal L}_\theta
\]
is of the form (\ref{gendiffa}), but with different expressions of the
coefficients given below.
\end{proposition}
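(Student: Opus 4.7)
The plan is to follow the same blueprint used for Theorem \ref{propdiff}, transposed from the $z$ variable to the straightened range variable $\zeta$. First I would assemble the coupled amplitude equations. Decomposing $\hat u(\omega,\xi,\zeta)$ in the eigenbasis $\{\phi_j(\omega,\xi)\}$ of the unperturbed mixed problem, substituting into \eqref{eq:pertw2n}, and projecting onto $\phi_j$ yields a random system of the type \eqref{eq:WD3}--\eqref{eq:WD4} in the variable $\zeta$, with coupling coefficients given by \eqref{def:Cjln}. Since the statement instructs us to neglect the evanescent modes, the resulting system involves only $\hat a_j$ and $\hat b_j$ for $j = 1,\ldots,N$. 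The new coefficients $e_{\nu,jl}, f_{\nu,jl}$ and the $\nu''$, $\nu'''$ derivative contributions appear because the change of coordinates \eqref{eq:NZ} contains $T'(z)$ and $Q(z)$; however these derivatives are still stationary, mean-zero, mixing processes, so the overall structural form of the system is preserved.

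Next I would apply the long range rescaling of Section \ref{sec:chap21_propmatr}, i.e.\ set $\hat a_j^\eps(\omega,\zeta) = \hat a_j(\omega,\zeta/\eps^2)$, and invoke the forward scattering approximation of Section \ref{secforward}: the backward amplitudes $\hat b_j^\eps$ satisfy a homogeneous end condition and, under the assumption that $\hat\cR_\nu$ and $\hat\cR_\mu$ are negligible at $\beta_j+\beta_l$, their coupling to the forward modes vanishes in the diffusion limit. This yields the closed system
\begin{equation*}
\frac{d \hat{\itbf a}^\eps}{d\zeta} = \frac{1}{\eps}{\bf H}^{(a)}_\omega\Big(\frac{\zeta}{\eps^2}\Big)\hat{\itbf a}^\eps + {\bf G}^{(a)}_\omega\Big(\frac{\zeta}{\eps^2}\Big)\hat{\itbf a}^\eps\,,
\end{equation*}
driven by the stationary, mean-zero, $\varphi$-mixing processes $\mu,\nu$ and their first three derivatives. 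The lemma just proved guarantees that passing from the physical range $z$ to the straightened range $\zeta$ only shifts the independent variable by $\EE[\nu'(0)^2]\,z$ plus a vanishing error, so the diffusion limit in $\zeta$ translates back to the physical problem modulo a deterministic net phase, which will be absorbed into $\kappa_j$.

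Then I would invoke the diffusion approximation theorem of \cite{kohler74}, exactly as in Section \ref{sect:diffThm}: the drift ${\bf H}^{(a)}_\omega$ has zero expectation and oscillatory phases $e^{\pm i(\beta_l-\beta_j)\zeta/\eps^2}$, so the generator of the limit is obtained from the standard formula as the sum of a quadratic contribution from the correlations of ${\bf H}^{(a)}_\omega$ and a linear contribution from $\EE[{\bf G}^{(a)}_\omega]$. Writing $\hat a_j = P_j^{1/2} e^{i\theta_j}$ and applying the chain rule to the resulting second-order operator produces the decomposition ${\mathcal L} = {\mathcal L}_P + {\mathcal L}_\theta$ with the same polynomial structure \eqref{gendiff2P} and \eqref{gendiff2T}. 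The coefficients $\Gamma_{jl}^{(c)}, \Gamma_{jl}^{(0)}, \Gamma_{jl}^{(s)}, \kappa_j$ come out as quadratic forms in $c_{\nu,jl}, d_{\nu,jl}, e_{\nu,jl}, f_{\nu,jl}, c_{\mu,jl}, d_{\mu,jl}$ weighted by $\hat\cR_\nu, \hat\cR_\mu$ and their even derivatives at $\beta_j - \beta_l$ and at $0$.

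The main obstacle is the bookkeeping for the coefficients. In the Dirichlet case the identities \eqref{eq:symmetries} collapse the multiple integration-by-parts contributions into the concise expressions \eqref{defgamma1}--\eqref{defgamma1s} involving only $\partial_\xi \phi_j$ at the boundaries. For mixed conditions $\phi_j(X) \ne 0$ while $\partial_\xi\phi_j(0) \ne 0$, so the analogues of these identities must be rederived from the Helmholtz equation $\partial_\xi^2 \phi_l = (\beta_l^2 - \omega^2/c^2)\phi_l$ and the mixed boundary conditions; the boundary term $(\omega^2/c(X)^2-\beta_l^2)\phi_j(X)\phi_l(X)$ already visible in \eqref{def:cnun} is precisely such a contribution. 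Once the correct symmetry relations are in hand, the coefficients are obtained by the same formal procedure as in Theorem \ref{propdiff}: cross-multiplying the real and imaginary parts of the terms in \eqref{def:Cjln}, taking the power spectral densities at $\beta_j - \beta_l$, and isolating the symmetric and antisymmetric parts to read off $\Gamma^{(c)}, \Gamma^{(0)}, \Gamma^{(s)}$ and the dispersion $\kappa_j$. The rest of the verification (conservation of total power, non-negativity of the off-diagonal $\Gamma^{(c)}_{jl}$, row-sum condition \eqref{defgamma1b}) follows by the same arguments as in Section \ref{sect:discuss}.
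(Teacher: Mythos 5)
Your proposal is correct and follows essentially the same route as the paper, which itself only sketches this result by appealing to the machinery of Sections 4.2--4.4 after deriving the coupled amplitude equations in $\zeta$ and the lemma relating $(\cX,\cZ)$ to $(x,z)$. Your additional remarks on rederiving the symmetry relations for the mixed boundary conditions and on absorbing the deterministic phase shift $\EE[\nu'(0)^2]z$ into $\kappa_j$ are consistent with the paper's computations.
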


\vspace{0.1in}
\noindent The coefficients $\Gamma^{(c)}_{jl}$ in ${\mathcal L}_P$ are
given by
\begin{equation}
\Gamma_{jl}^{(c)}(\omega) = \hat \cR_\mu\left(\beta_j-\beta_l\right)
Q_{\nu,jl}^2 + \hat \cR_\mu\left(\beta_j-\beta_l\right) Q_{\mu,jl}^2
\quad \mbox{ if } j \neq l \, ,
\end{equation}
where
\begin{eqnarray}
\nonumber Q_{\nu,jl} &=& c_{\nu,jl} + d_{\nu,jl}
\beta_l(\beta_l-\beta_j) -(\beta_l-\beta_j)^2 \big[ e_{\nu,jl} +
f_{\nu,jl} \beta_l(\beta_l-\beta_j) \big] \\ &=& \frac{X}{2
\sqrt{\beta_j \beta_l}}\left[ \frac{\omega^2}{c(X)^2}- \beta_l\beta_j
\right] \phi_j(X) \phi_l(X) \, , \\ \nonumber Q_{\mu,jl} &=&
c_{\mu,jl} +d_{\mu,jl} (\beta_l^2 -\beta_j^2) = \frac{X}{2
\sqrt{\beta_j \beta_l}} \partial_\xi \phi_j(0) \partial_\xi \phi_l(0)
\, .
\end{eqnarray}
The coefficients in ${\mathcal L}_\theta$ are similar, 
\begin{equation}
\Gamma_{jl}^{(0)}(\omega) = \hat \cR_\mu(0) Q_{\nu,jl}^2 + \hat
\cR_\mu(0) Q_{\mu,jl}^2 \quad \forall j, l \, ,
\end{equation}
and 
\begin{equation}
\Gamma_{jl}^{(s)}(\omega) = \gamma_{\nu,jl} Q_{\nu,jl}^2 +
\gamma_{\mu,jl} Q_{\mu,jl}^2 \quad \mbox{ if } j \neq l \, ,
\end{equation}
with $\gamma_{\nu,jl}$ and $\gamma_{\mu,jl}$ defined by
\eqref{eq:gammanumu}.

We find again that these effective coupling coefficients depend only
on the behaviors of the mode profiles close to the boundaries.  In the
case of Dirichlet boundary conditions, the mode coupling coefficient
$\Gamma_{jl}^{(c)}(\omega)$ depends on the value of $ \partial_\xi
\phi_j \partial_\xi \phi_l$ at the boundaries.  In the case of Neumann
boundary conditions, the mode coupling coefficient
$\Gamma_{jl}^{(c)}(\omega)$ depends on the value of $ \phi_j(X)
\phi_l(X)$.

Given the generator, the analysis of the loss of coherence, and of the
mode powers is the same as in sections
\ref{subseccoupledpower2}-\ref{secfluc}.

\section{Summary}
\label{sect:summary}
In this paper we obtain a rigorous quantitative analysis of wave propagation in
two dimensional waveguides with random and stationary fluctuations of
the boundaries, and either Dirichlet or Neumann boundary conditions.
The fluctuations are small, of order $\eps$, but their effect becomes
significant over long ranges $z/\eps^2$. We carry the analysis in
three main steps: First, we change coordinates to straighten the
boundaries and obtain a wave equation with random coefficients.
Second, we decompose the wave field in propagating and evanescent
modes, with random complex amplitudes satisfying a random system
of coupled differential equations. We analyze the evanescent modes
and show how to obtain a closed system of differential equations for
the amplitudes of the propagating modes. In the third step we analyze
the amplitudes of the propagating modes in the long range limit, and
show that the result is independent of the particular choice of the
change of the coordinates in the first step. The limit process is a
Markov diffusion with coefficients in the infinitesimal generator
given explicitly in terms of the covariance of the boundary
fluctuations. Using this limit process, we quantify mode by mode the
loss of coherence and the exchange (diffusion) of energy between modes
induced by scattering at the random boundaries.

The long range diffusion limit is similar to that in random waveguides
with interior inhomogeneities and straight boundaries, in the sense
that the infinitesimal generators have the same form. However, the net
scattering effects are very different. We quantify them explicitly in a 
high frequency regime, in the case of a constant wave speed, and compare
the results with those in waveguides with interior random inhomogeneities. 
In particular, we estimate three important length scales: the scattering mean free path, 
the transport mean free path and the equipartition distance. The first two give the distances 
over which the waves lose their coherence and  forget their direction, respectively.  
The last is the distance over which the cumulative scattering distributes the energy uniformly 
among the modes, independently of the initial conditions at the
source. 

We obtain that in waveguides with random boundaries the lower order modes
have a longer scattering mean free path, which is comparable to the transport mean free 
path and, remarkably to the equipartition distance. The high order modes 
lose coherence rapidly, they have a short scattering mean free path, and do not exchange energy efficiently with the other modes. 
They  also have a transport mean free path that exceeds the scattering mean free path.
In contrast, in waveguides with interior random inhomogeneities, all the modes lose 
their coherence over much shorter distances than in waveguides with random boundaries.   
Moreover, the main mechanism of loss of coherence is the exchange of energy with 
the nearby modes, so  the scattering mean free paths and the transport mean free paths are similar for all the modes. 
Finally, the equipartition distance is much longer than the distance over which all the modes 
lose their coherence.

These results are useful in applications such as imaging with remote
sensor arrays. Understanding how the waves lose coherence is essential
in imaging, because it allows the design of robust methodologies that
produce reliable, statistically stable images in noisy environments
that we model mathematically with random processes. An example of a
statistically stable imaging approach guided by the theory in random
waveguides with internal inhomogeneities is in \cite{borcea}.

\section*{Acknowledgments}  
The work of R. Alonso was partially supported by the Office of Naval
Research, grant N00014-09-1-0290 and by the National Science
Foundation Supplemental Funding DMS-0439872 to UCLA-IPAM.  The work of
L. Borcea was partially supported by the Office of Naval Research,
grant N00014-09-1-0290, and by the National Science Foundation, grants
DMS-0907746, DMS-0934594.

\appendix
\section{Proof of Lemma \ref{lem.1}}
\label{sect:Proof}
The proof given here relies on explicit estimates of the series in
(\ref{eq:E6}), obtained under the assumption that the background speed
is constant $c(\xi) = c_o$.
We rewrite (\ref{eq:E6}) as
\begin{equation}
\left[\Psi \hat{\itbf v}\right](\om,z) = \left[\Psi_1 \hat{\itbf
v}\right](\om,z) + \left[\Psi_2 \hat{\itbf
v}\right](\om,z)
\label{eq:PE1}
\end{equation}
with linear integral operators $\Psi_1$ and $\Psi_2$ defined
component wise by
\begin{eqnarray}
\big[ \Psi_1\hat{\itbf v} \big]_{j}(\om,z) &=& \sum_{l= N+1}^\infty
\frac{1}{2\beta_{j}}\int^{\infty}_{-\infty}
(M^{\eps}_{jl}-\partial_{z}Q^{\eps}_{jl})(z+s)\hat{v}_{l}(\om,z+s)
e^{-\beta_{j}|s|}ds  ,
\label{eq:Psi1}  \\
\big[\Psi_2 \hat{\itbf v} \big]_{j}(\om,z) 
&=& 
\sum_{l = N+1}^\infty
\frac{1}{2}\int^{\infty}_{-\infty} Q^{\eps}_{jl}(z+s)
\hat{v}_{l}(\om, z+s)e^{-\beta_{j}|s|}ds.
\label{eq:Psi2}
\end{eqnarray}
The coefficients have the explicit form
\begin{eqnarray}
M_{jl}^\eps(z) &=& \left\{ 2 \left[\nu(z)-\mu(z)\right]
\left(\frac{\pi j}{X}\right)^2 + \frac{\nu''(z)-\mu''(z)}{2} \right\}
\delta_{jl} + (1-\delta_{jl}) \left[ \nu''(z)-\mu''(z)\right]
\frac{2lj}{j^2-l^2} - \nonumber \\ && (1-\delta_{jl}) \nu''(z)
\frac{2lj}{j^2-l^2} \left[ 1 - (-1)^{l+j} \right] + O(\eps)  ,
\label{eq:PE2}\\
Q_{jl}^\eps(z) &=&  \left[\nu'(z) - \mu'(z) \right] \delta_{jl} +
(1-\delta_{jl}) \left[ \nu'(z)-\mu'(z)\right]
\frac{4lj}{j^2-l^2} - \nonumber \\ && (1-\delta_{jl}) \nu'(z)
\frac{4lj}{j^2-l^2} \left[ 1 - (-1)^{l+j} \right] + O(\eps).
\label{eq:PE3}
\end{eqnarray}

Let $\ell^{2}_{1}(\mathbb{Z};L^{2}(\mathbb{R}))$ be the space of
square summable sequences of $L^{2}(\mathbb{R})$ functions with linear
weights, equipped with the norm
\begin{equation*}
\|\textit{\textbf{v}}\|_{\ell_1^2}:=\Big[\sum_{j\in\mathbb{Z}}(j\;\|v_{j}
\|_{L^{2}(\mathbb{R})})^{2}\Big]^{1/2}.
\end{equation*}   
We prove that
$\Psi:\ell^{2}_{1}(\mathbb{Z};L^{2}(\mathbb{R}))\rightarrow
\ell^{2}_{1}(\mathbb{Z};L^{2}(\mathbb{R}))$ is bounded.  The proof
consists of three steps: \\

\textbf{Step 1}: Let $T$ be an
auxiliary operator acting on sequences
$\textit{\textbf{v}}=\{v_{l}\}_{l\in\mathbb{Z}}$, defined
component wise by
\begin{equation*}
[T \textit{\textbf{v}}]_{j}=\sum_{l \neq \pm j} \frac{j\;l}{j^{2}-l^{2}}\;v_{l} = 
\sum_{l \neq \pm j} \left( \frac{l/2}{j+l}+\frac{l/2}{j-l} \right) \;v_{l} = 
\frac{1}{2}\left((-l\;v_{-l})\ast
\frac{1}{l}+(l\;v_{l})\ast
\frac{1}{l}\right)_{j}+\frac{1}{4}(v_{-j}-v_{j}).
\end{equation*}
This operator is essentially the sum of two discrete Hilbert
transforms, satisfying the sharp estimates \cite{Gr}
\begin{equation*}
\|\textit{\textbf{v}}\ast \frac{1}{l}\|_{\ell^2}\leq \pi
\|\textit{\textbf{v}}\|_{\ell^2}.
\end{equation*}
Therefore, the operator $T$ is bounded as  
\begin{equation}\label{op1}
\|T\textit{\textbf{v}}\|_{\ell^2}\leq (1/2+\pi)\;\sum_{j \in
\mathbb{Z}} \|v_j\|_{\ell_1^2}.
\end{equation}
\\ 

\textbf{Step 2}: Let
$\textit{\textbf{v}}(z)=\{v_{l}(z)\}_{l\in\mathbb{Z}}$ be a sequence
of functions in $\mathbb{R}$ and define the operator
\begin{equation}
Q:\ell^{2}_{1}(\mathbb{Z};L^{2}(\mathbb{R})) \to
\ell^{2}_{1}(\mathbb{Z};L^{2}(\mathbb{R})), \qquad
    [Q\textit{\textbf{v}}]_{j}(z)=[T\textit{\textbf{v}}]_{j}\ast
    e^{-\beta_{j}|s|}(z)\;1_{\{j>N\}},
\label{op01}
\end{equation}
where 
\begin{equation}
\beta_{j}=\sqrt{\left(\frac{\pi
    j}{X}\right)^2-\left(\frac{\omega}{c_0}\right)^2} \geq
\frac{j\;\pi}{X}\;\sqrt{1-\left(\frac{\omega X/(\pi
    c_0)}{N+1}\right)^{2}}=: j\;C(\omega), \quad \mbox{for} ~ j > N.
\label{op02}
\end{equation}
Using Young's inequality
\begin{eqnarray}
\|
  [Q\textit{\textbf{v}}]_{j}\|_{L^2(\mathbb{R})}=\|[T\textit{\textbf{v}}]_{j}\ast
  e^{-\beta_{j}|s|}\|_{L^2(\mathbb{R})} \leq
  \|[T\textit{\textbf{v}}]_{j}\|_{L^{2}(\mathbb{R})}\|e^{-\beta_{j}|s|}\|_{L^{1}(\mathbb{R})}=
  \frac{2}{\beta_{j}}\;\| [T\textit{\textbf{v}}]_{j}
  \|_{L^{2}(\mathbb{R})},\label{op03}
\end{eqnarray}
we obtain from \eqref{op1}-\eqref{op03} that $\|Q\| \le (1 + 2 \pi)/C(\om)$, because 
\begin{eqnarray}
\sum_{j\in\mathbb{Z}} \left(j\;\|
    [Q\textit{\textbf{v}}]_{j}\|_{L^2(\mathbb{R})}\right)^{2}&\leq&
    \frac{4}{C(\omega)^{2}}\;\sum_{j\in\mathbb{Z}}\|
         [T\textit{\textbf{v}}]_{j}
         \|^{2}_{L^{2}(\mathbb{R})}=\frac{4}{C(\omega)^{2}}\;\int_{\mathbb{R}}\;
         \sum_{j\in\mathbb{Z}} |[T\textit{\textbf{v}}]_{j}(z) |^{2}
         dz\nonumber \\ &\leq& \frac{4}{C(\omega)^{2}} (1/2+\pi)^{2}
         \int_{\mathbb{R}}\; \sum_{j\in\mathbb{Z}} | j\;v_{j}(z)
         |^{2}dz =\frac{4(1/2+\pi)^{2}}{C(\omega)^{2}}\sum_{j\in\mathbb{R}}
         \left( j \|v_{j}\|_{L^{2}(\mathbb{R})}\right)^2. \qquad 
\label{est1}
\end{eqnarray}

This estimate applies to the operator $\Psi_2$.  Indeed, let us
express $\Psi_2$ in terms of the operator $Q$ using \eqref{eq:Psi2}
and \eqref{eq:PE3},
\begin{equation}\label{defT}
[\Psi_{2} \textit{\textbf{v}}]_{j}(z)=
\frac{1}{2}((\nu'-\mu')v_{j})\ast
e^{-\beta_{j}|s|}(z)1_{\{j>N\}} - 2[Q \mu'
  \;v_{l}]_{j}(z) +2(-1)^{j}[Q \nu'(-1)^{l}\;v_{l}]_{j}(z).
\end{equation}
That the sum in $\Psi_2$ is for $l> N$ is easily fixed by using the
truncation $v_{l}=\hat{v}_{l}\;1_{\{l>N\}}$.  Thus, using estimate
\eqref{est1} for the last two terms, we obtain
\begin{equation*}
\| \Psi_{2} \hat{\textit{\textbf{v}}} \|_{\ell^{2}_{1}}\leq
\frac{5+8\pi}{C(\omega)}\left(\|\mu\|_{W^{1,\infty}(\mathbb{R})}+
\|\nu\|_{W^{1,\infty}(\mathbb{R})}\right)\|\hat{\textit{\textbf{v}}}\|_{\ell^{2}_{1}}.
\end{equation*}
\\

\textbf{Step 3}: It remains to show that the operator $\Psi_1$
is bounded. We see from (\ref{eq:Psi1}), \eqref{eq:PE2} and
\eqref{eq:PE3} that for any $j>N$
\begin{equation*}
[\Psi_{1}\hat{\textit{\textbf{v}}}]_{j}(z)=\frac{\pi^2 j^2}{\beta_{j}
  X^{2}}((\nu-\mu)\hat{v}_{j})\ast
e^{-\beta_{j}|s|}(z)1_{\{j>N\}}-\frac{1}{\beta_{j}}[\tilde{\Psi}_{2}
  \hat{\textit{\textbf{v}}}]_{j}(z),
\end{equation*}
where $\tilde{\Psi}_{2}$ is just like the operator $\Psi_{2}$, with the
driving process $(\nu', \mu')$ replaced by its derivative
$(\nu'',\mu'')$.  Using again Young's inequality, we have 
\begin{align*}
\| [\Psi_{1}\hat{\textit{\textbf{v}}}]_{j}\|_{L^{2}(\mathbb{R})} &\leq
2\left(\frac{\pi}{X
  C(\omega)}\right)^{2}\|(\nu-\mu)\hat{v}_{j}\|_{L^{2}(\mathbb{R})} +\frac{1}{j
  C(\omega) }\| [\tilde{\Psi}_{2}
  \hat{\textit{\textbf{v}}}]_{j}\|_{L^{2}(\mathbb{R})}.
\end{align*}
Now multiply by $j$ and use the triangle inequality to obtain that
$\Psi_1$ is bounded,
\begin{align*}
\|\Psi_{1}\hat{\textit{\textbf{v}}}\|_{\ell^{2}_{1}}&\leq
\left[ \frac{2 \pi^2}{C^2(\om) X^2} \left( \|\nu\|_{L^\infty} +
    \|\mu\|_{L^\infty}\right) + \frac{\left(5 +
  8 \pi\right)}{C^2(\om)}\left( \|\nu\|_{W^{2,\infty}} +
  \|\mu\|_{W^{2,\infty}}\right) \right]
\|\hat{\textit{\textbf{v}}}\|_{\ell^{2}_{1}}.
\end{align*}

\section{Independence of the change of coordinates}
\label{ap:coordc}
We begin the proof of Theorem \ref{thm.2} with the observation that 
\begin{equation*}
\hat{w}(\omega,\xi,z)=\hat{u}\left(
\omega,\ell^{\eps, -1}(z,F^{\eps}(z,\xi)),z\right),
\end{equation*}
where $\ell^{\eps, -1}$ is the inverse of $\ell^{\eps}$,
meaning that $\hat w$ and $\hat u$ are related by composition of the
change of coordinate mappings.  Clearly, the composition inherits the
uniform convergence property
\begin{equation}\label{et1}
\sup_{z\geq0}\sup_{\xi\in[0,X]}|\ell^{\eps, -1}(z,F^{\eps}(z,\xi))-\xi|=
O(\eps).
\end{equation}

For the sake of simplicity we neglect the evanescent modes in the
proof, but they can be added using the techniques described in section
\ref{sect:elim_evanesc}. Using the propagating mode representation of
$\hat{u}(\omega,\xi,z)$,
\begin{align}\label{e1}
\hat{w}(\omega,\xi,z)
&=\sum^{N}_{l=1}\phi_{l}(\omega,\xi)
\hat{u}_{l}(\omega,z)+\sum^{N}_{l=1}\tilde{\phi}_{l}(\omega,\xi,z)
\hat{u}_{l}(\omega,z),
\end{align}
where we let
\begin{align*}
\tilde{\phi}_{l}(\omega,\xi,z) &=\phi_{l}
\left(\omega,\ell^{\eps, -1}(z,F^{\eps}(z,\xi))\right)-
\phi_{l}(\omega,\xi)\\ &=\int^{1}_{0}\left(\ell^{\eps, -1}
(z,F^{\eps}(z,\xi))-\xi\right)\partial_{\xi}\phi_{l}\left(\omega,s\;
\ell^{\eps, -1}(z,F^{\eps}(z,\xi)) +(1-s)\;\xi\right)\;ds.
\end{align*}
But we can also carry out the mode decomposition directly on $\hat w$
and obtain
\begin{equation}\label{e2}
\hat{w}(\omega,\xi,z)=\sum^{N}_{l=1}\phi_{l}(\omega,\xi)\hat{w}_{l}(\omega,z),
\end{equation}
because the number of propagating modes $N$ and the eigenfunctions
$\phi_j$ in the ideal waveguide are independent of the change of
coordinates.  Here $\hat{w}_{l}(\omega,z)$ are the amplitudes of the
propagating modes of $\hat{w}$.  Equating identities \eqref{e1} and
\eqref{e2}, multiplying by $\phi_{j}(\omega,\xi)$ and integrating in
$[0,X]$ we conclude that
\begin{equation}\label{e3}
\hat{w}_{j}(\omega,z)=\hat{u}_{j}(\omega,z)+\sum^{N}_{l=1}
\tilde{c}_{lj}(\omega,z)\hat{u}_{l}(\omega,z),
\end{equation}
where we introduced the random processes,
$$
\tilde{c}_{lj}(\omega,z)= 
\int^{X}_{0}\phi_{j}(\omega,\xi)\int^{1}_{0}
\partial_{\xi}\phi_{l}\left(\omega,s\;\ell^{\eps, -1}(z,F^{\eps}(z,\xi))+(1-s)
\;\xi\right)\left(\ell^{\eps, -1}(z,F^{\eps}(z,\xi))-\xi\right) ds d\xi.
$$
In addition, differentiating equation $\eqref{e3}$ in $z$,  we have
\begin{equation}\label{e4}
\partial_{z}\hat{w}_{j}(\omega,z)=\partial_{z}\hat{u}_{j}(\omega,z)+
\sum^{N}_{l=1}\partial_{z}\tilde{c}_{lj}(\omega,z)\hat{u}_{l}(\omega,z)+
\tilde{c}_{lj}(\omega,z)\partial_{z}\hat{u}_{l}(\omega,z).
\end{equation}

Now, let us recall from the definition of the forward and backward
propagating modes that
\begin{equation*}
i\beta_{j}\hat{u}_{j}(\omega,z)+\partial_{z}\hat{u}_{j}(\omega,z)=
2i\sqrt{\beta_{j}}\;\hat{a}_{j}(\omega,z)e^{i\beta_{j}z}.
\end{equation*}
We conclude from \eqref{e3} and \eqref{e4} that
\begin{multline}\label{e5}
\hat{a}^{w}_{j}(\omega,z)=\hat{a}_{j}(\omega,z) +
\frac{1}{2}\sum^{N}_{l=1}\tilde{c}_{lj}(\omega,z)\left(\frac{\beta_{j}+
  \beta_{l}}{\sqrt{\beta_j
    \beta_{j}}}\;\hat{a}_{l}(\omega,z)e^{-i(\beta_{j}-\beta_{l})z}+
\frac{\beta_{j}-\beta_{l}}{\sqrt{\beta_j\beta_{j}}}\;
\hat{b}_{l}(\omega,z)e^{-i(\beta_{j}+\beta_{l})z}\right)\\ +
\frac{i}{2}\sum^{N}_{l=1}\frac{\partial_{z}\tilde{c}_{lj}(\omega,z)}{\sqrt{\beta_{j}
    \beta_{l}} }
\left(\hat{a}_{l}(\omega,z)e^{-i(\beta_{j}-\beta_{l})z}+
\hat{b}_{l}(\omega,z)e^{-i(\beta_{j}+\beta_{l})z}\right)\, ,
\end{multline}
where $\{\hat{a}^{w}_{j}(\omega,z)\}_{j=1, \ldots, N}$ are the
amplitudes of the forward propagating modes of $\hat{w}(\omega,\xi,z)$.
A similar equation holds for the backward propagating mode amplitudes
$\{\hat{b}^{w}_{j}(\omega,z)\}_{j=1, \ldots, N}$.  

The processes $\tilde{c}_{lj}(\omega,z)$ can be bounded as \eqref{as2}
\begin{eqnarray}
\max_{1\leq j,l\leq N}\{\sup_{z\geq0}|\tilde{c}_{lj}(\omega,z)|\}
\leq X \max_{1\leq j,l\leq N}\{ \sup_{\xi\in[0,X]}
|\phi_{j}(\omega,\xi) |\sup_{\xi\in[0,X]}
|\partial_{\xi}\phi_{l}(\omega,\xi)| \} \; \times \nonumber\\ 
\sup_{z\geq0}\sup_{\xi\in[0,X]}
|\ell^{\eps, -1}(z,F^{\eps}(z,\xi))-\xi| = O(\eps).\label{ct1}
\end{eqnarray}
For the processes $\partial_{z}\tilde{c}_{lj}(\omega,z)$ we find a
similar estimate.  Indeed, note that
\begin{eqnarray*}
&&\partial_{z}\left[\partial_{\xi}\phi_{l}\left(\omega,s\; \ell^{\eps,
      -1}(z,F^{\eps}(z,\xi))+(1-s)\;\xi\right) \left(\ell^{\eps,
      -1}(z,F^{\eps}(z,\xi))-\xi\right)\right] =
  \\&& \hspace{0.2in} -\lambda_{l}\;\phi_{l}(\omega,s\;\ell^{\eps, -1}
  (z,F^{\eps}(z,\xi))+(1-s)\;\xi) \;s\;\partial_{z}[\ell^{\eps,
      -1}(z,F^{\eps}(z,\xi))]\; (\ell^{\eps,
    -1}(z,F^{\eps}(z,\xi))-\xi) + \\ && \hspace{1.9in}
  \partial_{\xi}\phi_{l}(\omega,s\;\ell^{\eps,
    -1}(z,F^{\eps}(z,\xi))+ (1-s)\;\xi)\;\partial_{z}[\ell^{\eps,
      -1}(z,F^{\eps}(z,\xi))].
\end{eqnarray*}
A direct calculation shows that
\begin{align*}
\partial_{z} &
\left[\ell^{\eps, -1}(z,F^{\eps}(z,\xi))\right]=\partial_{z}\left[
  \frac{ X( F^{\eps}(z,\xi)-\eps\mu(z) ) }{
    X(1+\eps\nu(z))-\eps\mu(z) }
  \right]\\ &=X\frac{(\partial_{z}F^{\eps}(z,\xi)-
  \eps\mu'(z))(X(1+\eps\nu(z))-\eps\mu(x))-(
  F^{\eps}(z,\xi)-\eps\mu(z) )\;\eps\;( \nu'(z)-\mu'(z)
  )}{(X(1+\eps\nu(z))-\eps\mu(x))^{2}}.
\end{align*}
Hence, using condition \eqref{as2} for
$\partial_{z}F^{\eps}(z,\xi)$
\begin{equation*}
\sup_{z\geq0}\sup_{\xi\in[0,X]} \left|\partial_{z}\left
    [\ell^{\eps,  -1}(z,F^{\eps}(z,\xi))\right]\right|\leq
    C(\|v\|_{W^{1,\infty}},\|\mu\|_{W^{1,\infty}})\;\eps.
\end{equation*}
Therefore,
\begin{eqnarray}
\max_{1\leq j,l\leq
  N}\{\sup_{z\geq0}|\partial_{z}\tilde{c}_{lj}(\omega,z)|\}\leq
X\max_{1\leq j,l\leq N}\{ \lambda_{l} \sup_{\xi\in[0,X]}
|\phi_{j}(\omega,\xi) |\sup_{\xi\in[0,X]} |\phi_{l}(\omega,\xi)|
\}\;O(\eps^{2}) + \nonumber\\  X\max_{1\leq j,l\leq N}\{
\sup_{\xi\in[0,X]} |\phi_{j}(\omega,\xi) |\sup_{\xi\in[0,X]}
|\partial_{\xi}\phi_{l}(\omega,\xi)| \}\;O(\eps).\label{ct2}
\end{eqnarray}

Let $\hat{\itbf a}^w(\omega,z)$ and $\hat{\itbf b}^w(\omega,z)$ be
the vectors containing the forward and backward propagating mode amplitudes and define
the joint process of propagating mode amplitudes $\bX_\om^{
  w}(z)=(\hat{\itbf a}^w(\omega,z),\hat{\itbf b}^w(\omega,z))^{T}$.
Let us the long range scaled process be $\bX_\om^{\eps, w}(z) =
\bX_\om^{ w}(z/\eps^2).$ Equation \eqref{e5} implies that
\begin{equation}\label{e6}
\bX_\om^{\eps,w}(z)=
\bX_\om^{\eps}(z)+
\bold{M}_{\eps}\left(\omega,\bold{C}\Big(\omega,\frac{z}{\eps^2}\Big),
\partial_{z}\bold{C}\Big(\omega,\frac{z}{\eps^{2}}\Big),\frac{z}{\eps^{2}}\right)
\bX_\om^\eps(z),
\end{equation}
where $\bold{C}(\omega,z):=(\tilde{c}_{lj}(\omega,z))_{j,l=1,\ldots,N}$ and
$\partial_{z}\bold{C}(\omega,z):=(\partial_{z}\tilde{c}_{lj}(\omega,z))_{j,l=1,\ldots,N}$.
 The subscript $\eps$ in the matrix
$\bold{M}_{\eps}(\cdot)$ denotes the fact that this matrix depends
explicitly on $\eps$ and, due to estimates \eqref{ct1} and
\eqref{ct2}, we have 
\begin{equation}\label{e7}
\sup_{z\geq0}\|\bold{M}_{\eps}(\omega,\bold{C}(\omega,z),\partial_{z}
\bold{C}(\omega,z),z)\|_{\infty}=O(\eps).
\end{equation} 

Let us prove then, that the processes $\bX_\om^{\eps, w}(z)$ and
$\bX_\om^\eps(z)$ converge in distribution to the same diffusion
limit.  Denote by $Q(\bX_{0},L)$ the $2N$-dimensional cube with
center $\bX_0$ and side $L$.  The probability that
$\bX_\om^{\eps, w}(z)$ is in this cube can be calculated using
\eqref{e6}, 
\begin{align}
\label{dl1}
\mathbb{P}[\bX^{\eps,w}_\om(z)\in
  Q(\bX_0,L)]&=\int_{\{ \bx \in
  Q(\bX_{0},L)\}}\;d\mathbb{P}^{w}\left(\bx,
\frac{z}{\eps^{2}}\right)\nonumber\\ &=\int_{\{\bx \in
  ({\bf I}+\bold{M}_{\eps}(\bold{C},\partial_{z}\bold{C},z))^{-1}
  Q( \bx_{0},L) \}}\;d\mathbb{P}
\left(\bx,\bold{C},\partial_{z}\bold{C},\frac{z}{\eps^{2}}\right).
\end{align}
Here $\mathbb{P}^{w}(\bx,z)$ is the probability distribution of
the process $\bX^{w}_\om(z)$ and
$\mathbb{P}\left(\bx,\bold{C},\partial_{z}\bold{C},z\right)$ is
the joint probability distribution of the processes
$(\bX_\om(z),\bold{C}(\omega,z),\partial_{z}\bold{C}(\omega,z))$.
We can take the inverse of
${\bf I}+\bold{M}_{\eps}(\bold{C},\partial_{z}\bold{C},z)$ by
\eqref{e7}.  The same estimate \eqref{e7} also implies that for every
$\delta>0$ there exists $\eps_{0}$ such that for
$\eps\leq\eps_0$,
\begin{equation}\label{dl2}
\{\bx\in Q(\bx_{0},(1-\delta)L)\}\subseteq\{\bx\in
({\bf I}+\bold{M}_{\eps}(\bold{C},\partial_{z}\bold{C},z))^{-1}
Q(\bx_{0},L)\}\subseteq\{\bx\in
Q(\bx_{0},(1+\delta)L)\}.
\end{equation}
Denote the diffusion limits by 
\begin{align*}
\tilde{\bX}_\om (z)=\lim_{\eps\rightarrow0}
\bX^\eps_\omega(z), \qquad  \tilde{\bX}^{w}_\omega(z)=\lim_{\eps\rightarrow0}
\bX^{\eps, w}_\omega(z).
\end{align*}
We conclude from \eqref{dl1} and \eqref{dl2} that for any $\delta>0$,
\begin{align*}
\mathbb{P}[\tilde{\bX}_\om(z)\in Q(\bX_0,(1-\delta)L)]\leq
\mathbb{P}[\tilde{\bX}_\om^{w}(z)\in Q(\bX_0,L)] \leq
\mathbb{P}[\tilde{\bX}_\om(z)\in Q(\bX_0,(1+\delta)L)].
\end{align*}
Sending $\delta\rightarrow 0$, we have that for any arbitrary cube $
Q(\bx_0,L)$
\begin{equation*}
\mathbb{P}[\tilde{\bX}_\omega(z)\in
  Q(\bX_0,L)]=\mathbb{P}[\tilde{\bX}^{w}_\omega(z)\in
  Q(\bX_0,L)].
\end{equation*}
This proves that the limit processes have the same distribution and
therefore, the same generator.

\section{Proof of Proposition  \ref{prop.estim}}
\label{ap:estim}
Recall the expression \eqref{eq:1} of the wavenumbers.  The first term
in \eqref{eq:EE2} follows from \eqref{defgamma10}:
\begin{equation}
\Gamma_{jj}^{(0)} = \left( \frac{\pi}{X} \right)^2 \left[ \hat
  \cR_\nu(0) + \hat \cR_\mu(0) \right] \frac{j^4}{(N+\alpha)^2 - j^2}
\approx \frac{(2 \pi)^{3/2}}{X} \frac{k \ell}{N}
\frac{j^4}{(N+\alpha)^2 - j^2} \,.
\label{eq:EE6}
\end{equation}
It increases
monotonically with $j$, with minimum value
\begin{eqnarray}
\Gamma_{11}^{(0)} \approx \frac{(2 \pi)^{3/2}}{X} \frac{k \ell }{N^3}
\ll 1\,,
\label{eq:EE6_1}
\end{eqnarray}
and maximum value
\begin{eqnarray}
\Gamma_{NN}^{(0)} \approx  \frac{(2 \pi)^{3/2}}{2 \alpha X} k \ell N^2 \gg 1 \, .
\label{eq:EE_2}
\end{eqnarray}

The second term in \eqref{eq:EE2}, which is in (\ref{eq:EE2J}),  follows from \eqref{defgamma1b}, \eqref{eq:PSGaussian} and 
\eqref{eq:EE5},
\begin{eqnarray}
- \Gamma_{jj}^{(c)}(\om) &\approx& \frac{(2 \pi)^{3/2} j^2 }{X
  \sqrt{(N+\alpha)^2 - j^2}}\hspace{-0.05in} \sum_{{\scriptsize
  \begin{array}{c}l = 1 \\ l \ne j \end{array}}}^N
  \hspace{-0.05in}\frac{l^2 k \ell}{N\sqrt{ (N+\alpha)^2 - l^2}}
 e^{ -\frac{(k \ell)^2}{2} \left( \sqrt{1 - j^2/(N+\alpha)^2} -
 \sqrt{1-l^2/(N+\alpha)^2}\right)^2}  \, . \quad
\label{eq:EE8}
\end{eqnarray}

If $0< j/N <1$,
then we can estimate (\ref{eq:EE8})
by using the fact that the main contribution to the sum in~$l$ comes from the terms with indices $l$ close to $j$,
provided that  $k \ell$ is larger than $N^{1/2}$ and smaller than $N$.
We find after the change of index $l=j+q$:
\begin{eqnarray*}
- \Gamma_{jj}^{(c)}(\om) &\approx& \frac{(2 \pi)^{3/2} j^4 k \ell}{X
 ((N+\alpha)^2 - j^2) N} \sum_{q\neq 0 }
 e^{ -\frac{(k \ell)^2}{2} \frac{j^2}{(N+\alpha)^2-j^2} \frac{q^2}{(N+\alpha)^2}} 
\end{eqnarray*}
Interpreting this sum as the Riemann sum of a continuous integral, we get
\begin{eqnarray}
- \Gamma_{jj}^{(c)}(\om)  &\approx& \frac{(2 \pi)^{3/2} j^4 k \ell}{X
 ((N+\alpha)^2 - j^2)}   \int_{-\infty}^\infty
 e^{ -\frac{(k \ell)^2}{2} \frac{j^2}{(N+\alpha)^2-j^2} s^2 } ds = 
  \frac{(2 \pi)^2 j^3}{X \sqrt{(N+\alpha)^2 - j^2}} .
  \label{eq:EE8b}
\end{eqnarray}
By comparing with (\ref{eq:EE6}) 
we find that the coefficient $- \Gamma_{jj}^{(c)}(\om)$ is larger than $\Gamma_{jj}^{(0)}$ when $k \ell$ satisfies 
$\sqrt{N}\ll k \ell \ll N$.\\
To be complete, note that:\\
- If $k \ell \sim N$, then  $- \Gamma_{jj}^{(c)}(\om)$ is larger than $\Gamma_{jj}^{(0)}$ if and only if 
$j/N < (1+(k\ell /N)^2 ) ^{-1/2}$.\\
- If $k \ell$ is larger than $N$, then the main contribution to the sum in $l$ comes only from
one or two terms with indices $l=j\pm 1$, and it becomes exponentially small in $ (k \ell)^2/ N^2$.
In these conditions $- \Gamma_{jj}^{(c)}(\om)$ becomes smaller than $\Gamma_{jj}^{(0)}$.\\

%
 
For $j \sim 1$ we can estimate \eqref{eq:EE8} again by
interpreting the sum over $l$ as a Riemann sum approximation of an
integral that we can estimate using the Laplace perturbation method.
Explicitly, for $j=1$ we have
\begin{eqnarray}
- \Gamma_{11}^{(c)}(\om) &\approx& \frac{(2 \pi)^{3/2}}{X} \frac{1}{N}
  \sum_{l = 2}^N
  \hspace{-0.05in}\frac{(l/N)^2 k \ell}{\sqrt{ (1+\alpha/N)^2 -
 (l/N)^2}} e^{ -\frac{(k \ell)^2}{2} \left( 1-
 \sqrt{1-(l/N)^2}\right)^2} \, \nonumber \\ &\approx & \frac{(2
 \pi)^{3/2}k \ell }{X} \int_{0}^1 ds \frac{s^2}{\sqrt{1 -
 s^2}} e^{-\frac{(k \ell)^2}{2} \left(1 - \sqrt{1-s^2}\right)^2}.
\label{eq:EE10}
\end{eqnarray}
We approximate the integral with Watson's lemma \cite[Section
  6.4]{bender}, after changing variables $ \zeta = (1-\sqrt{1-s^2})^2$
and obtaining that
\[
\int_{0}^1 ds \frac{s^2}{\sqrt{1 -
 s^2}} e^{-\frac{(k \ell)^2}{2} \left(1 - \sqrt{1-s^2}\right)^2} 
\approx \int_0^1 d \zeta \varphi(\zeta) e^{-\frac{(k \ell)^2}{2} \zeta}, \qquad 
\varphi(\zeta) = \frac{\zeta^{-1/4}}{\sqrt{2}} + O(\zeta^{1/4})\, .
\]
Watson's lemma gives 
\[
\int_{0}^1 ds \frac{s^2}{\sqrt{1 - s^2}} e^{-\frac{(k \ell)^2}{2}
  \left(1 - \sqrt{1-s^2}\right)^2} \approx \frac{\Gamma(3/4)
  2^{1/4}}{(k \ell)^{3/2}} \, , 
\]
and therefore by \eqref{eq:EE10} and \eqref{eq:kllarge},
\begin{eqnarray}
- \Gamma_{11}^{(c)}(\om) &\approx& \frac{(2 \pi)^{3/2} \Gamma(3/4)
  2^{1/4}}{X (k \ell)^{1/2}} 
\, .
\end{eqnarray}
By comparing with (\ref{eq:EE6_1}) 
we find that the coefficient $- \Gamma_{11}^{(c)}(\om)$ is larger than $\Gamma_{11}^{(0)}$.\\

For $j\sim N$  only the terms with $l \sim N$ contribute to the sum in
\eqref{eq:EE8}.
If $k \ell \sim \sqrt{N}$, then we find that 
$$
- \Gamma_{NN}^{(c)} (\omega) \approx \frac{(2\pi)^{3/2} N^2  k \ell}{2 \sqrt{\alpha} X} \sum_{q=1}^\infty \frac{1}{\sqrt{\alpha+q}} 
e^{ -\frac{(k\ell)^2}{2 N} (\sqrt{q+\alpha}- \sqrt{\alpha} )^2 }
\sim  \frac{(2\pi)^{3/2} N^3} {2 C(\alpha) k\ell X}  ,
$$
up to a constant $C(\alpha)$ that depends only on $\alpha$.
By comparing with (\ref{eq:EE_2}) we can see that  it is of the same order  as $\Gamma_{NN}^{(0)}$.
If $k \ell \gg \sqrt{N}$, then we find that 
$$
- \Gamma_{NN}^{(c)} (\omega) \approx \frac{(2\pi)^{3/2} N^2  k \ell}{2 \sqrt{\alpha(1+\alpha)} X}  
e^{ -\frac{(k\ell)^2}{2 N} (\sqrt{1+\alpha}- \sqrt{\alpha} )^2 } ,
 $$
which is very small because the exponential term is exponentially small in $(k\ell)^2/ N$.
In these conditions $- \Gamma_{NN}^{(c)} (\omega) $ is smaller than $\Gamma_{NN}^{(0)}$.

 \end{document}